\newtheorem{thm}{Theorem}[section]
\newtheorem{lem}[thm]{Lemma}
\newtheorem{prop}[thm]{Proposition}
\newtheorem{prob}[thm]{Problem}
\newtheorem{conj}[thm]{Conjecture}
\theoremstyle{definition}   
\def\qed{\hfill \rule{4pt}{7pt}}
\def\pf{\noindent {\it{Proof.}\hskip 2pt}}
\numberwithin{equation}{section}
\def\Z{{\mathbb{Z}}}
\def\G{{\mathbb{G}}}
\def\RW{{\mathrm{RW}}}
\definecolor{lw}{RGB}{255,0,0}
\definecolor{zs}{RGB}{0,0,255}
\newcommand{\lw}[1]{\textcolor{lw}{\bf #1}}
\newcommand{\zs}[1]{\textcolor{zs}{\bf #1}}
\definecolor{lightblue}{RGB}{0.68,0.85,0.9}
\definecolor{bluegray}{rgb}{0.4, 0.6, 0.8}
\newcommand{\rem}[1]{\textcolor{bluegray}{#1}}
\begin{document}
\begin{center}
{{\large\bf On Spectral Radius  of Biased Random Walks \\ on Infinite 
Graphs}\footnote{The project is supported partially by CNNSF (No.~11671216).}}
\end{center}
\vskip 2mm
\begin{center}
Z.
 Shi, V.
  Sidoravicius, H.
  Song, L. Wang,  K. Xiang

\end{center}


\vskip 2mm

\begin{abstract}


We consider a class of biased random walks on infinite graphs and present several general results on 
the spectral radius of
biased random walk.

\bigskip

\noindent{\it AMS 2010 subject classifications}. Primary 60J10, 60G50, 05C81; secondary 60C05, 05C63, 05C80.
\vskip 2mm
\noindent{\it Key words and phrases}. Biased random walk, infinite graph and group, spectral radius, speed.
\end{abstract}

\section{Introduction}
Let $G=(V(G), \, E(G))$ be a locally finite, connected infinite graph, where $V(G)$ is the set of its vertices and 
$E(G)$ is the set of its edges.  Fix a vertex $o$ of $G$ as {the} root.
For any reversible Markov chain on $G$, there is a stationary measure $\pi(\cdot)$ such that for any two adjacent vertices $x$ and $y$,  $\pi(x)p(x,\, y)=\pi(y)p(y,\, x)$, where $p(x,\, y)$ is the transition probability of the Markov
chain. For the edge joining vertices $x$ and $y$, we assign a weight
\[
c(x,y)
=
\pi(x)p(x, \, y) ,
\]

\noindent and call by {{\it conductances}} the weights of the edges. 
We study the biased random walks on the rooted graph $(G, \, o)$ defined as follows:

\noindent  For any vertex $x$ of $G$ let $|x|$ denote the graph distance between $x$ and $o$. Let 
$\mathbb{N} := \{ 1, \, 2, \ldots\}$ and  $\mathbb{Z}_{+}=\mathbb{N}\cup\{0\}$. For any $n\in\mathbb{Z}_{+}$:
$$
B_G(n)
=
\{x\in V(G):\ |x| \le n\},
\qquad
\partial B_G(n)
=
\{x\in V(G):\ |x| =n\}.
$$

\noindent Let $\lambda\in [0, \, \infty)$. If an edge $e=\{x,y\}$ is at distance $n$ from $o$, i.e., min$(|x|, |y|)=n$, its conductance is defined as $\lambda^{-n}$. Denote by $\RW_\lambda$  the nearest-neighbour random walk $(X_n)_{n=0}^\infty$ among such conductances and call it the $\lambda$-{\it biased random walk}. In other words, $\RW_{\lambda}$ 
has the following transition probabilities: for $v \sim u$ (i.e., if $u$ and $v$ are adjacent on $G$),
\begin{eqnarray}
    \label{(1.1)}
    p(v,u)
    :=
    p_{\lambda}^G(v,u)
  =
  \begin{cases}
    \frac{1}{d_v}
        &\mathrm{if}\ v=o,
        \\
        \frac{\lambda}{d_v+\left(\lambda-1\right)d_v^-}
        &\mathrm{if}\ u\in \partial B_G(|v|-1)\ \mathrm{and}\ v\neq o,
        \\
        \frac{1}{d_v+\left(\lambda-1\right)d_v^-}
        &\mathrm{otherwise}.
  \end{cases}
\end{eqnarray}

\noindent Here, $d_v$ is the degree of vertex $v$, and $d_v^-$, $d_v^0$ and $d_v^+$ are the numbers of edges connecting $v$ to $\partial B_G(|v|-1)$, $\partial B_G( |v| )$ and $\partial B_G(|v|+1)$ respectively. Note that
$$
d_v^{+}+d_v^0+d_v^-=d_v,
\qquad
d_v^{-}\ge 1,
\qquad
v\not=o,
\qquad
d_o^{-}=d_o^0=0,
$$

\noindent and that $\RW_{\lambda=1}$ is the simple random walk (SRW) on $G$.

By Rayleigh's monotonicity principle (see \cite{LR-PY2016}, p. 35), there is a critical value $\lambda_c(G)\in [0, \, \infty]$ such that $\RW_{\lambda}$ is transient for $\lambda<\lambda_c(G)$ and is recurrent for $\lambda>\lambda_c(G)$. Let $M_n=\#(\partial B_G(n))$ be the cardinality of $\partial B_G(n)$ for any $n\in\mathbb{Z}_{+}$. Define the volume growth rate of $G$ as
\[
\mathrm{gr}(G)
=
\liminf_{n\to \infty} M_n^{1/n}.
\]

\noindent When $G$ is a tree, $\lambda_c(G)$ is exactly the exponential of the Hausdorff dimension of the tree boundary, namely the branching number of the tree (\cite{Furst70}, \cite{LR1990}, \cite{LR-PY2016}). When $G$ is a transitive graph, $\lambda_c(G)=\mathrm{gr}(G)$ (see \cite{LR1995} and \cite{LR-PY2016}). Let
$$
\mathrm{gr}_+(G)
=
\liminf_{n\to\infty}\Big( \sum_{x\in\partial B_G(n-1)}d_x^{+}\Big)^{\! 1/n} .
$$

\noindent Clearly $\mathrm{gr}_+(G)\ge\mathrm{gr}(G)$.  If $G$ either is a tree or satisfies
$$\limsup_{n\to\infty}\left(\max_{|x|=n}d_x^+\right)^{1/n}=1,$$ 
then $\mathrm{gr}_+(G)=\mathrm{gr}(G)$.

 From the Nash-Williams criterion (\cite{LR-PY2016} Section 2.5), it follows that for any $G$ with $\mathrm{gr}_+(G)<\infty$, $\RW_\lambda$ is recurrent for $\lambda>\mathrm{gr}_+(G)$ and thus $\lambda_c(G)\le\mathrm{gr}_+(G)$.
If $G$ is spherically symmetric then $\lambda_c(G)=\mathrm{gr}_+(G)$ (\cite{LR-PY2016} Section 3.4, Exercise 3.11).

\medskip

An original motivation for introducing $\RW_\lambda$ by Berretti and Sokal \cite{BA-SA-1985} was to design a Monte-Carlo algorithm for self-avoiding walks. See \cite{LG-SA1988,SA-JM1989, RD1994} for refinements of this idea. Since the 1980s biased random walks and biased diffusions in disordered media have attracted much attention in mathematical and physics communities  due to their interesting phenomenology and similarities to concrete physical systems (\cite{BM-DD1983, DD1984, DD-SD1998, HS-BA2002}). 
 In the 1990s, Lyons (\cite{LR1990, LR1992, LR1995}), and Lyons, Pemantle and Peres (\cite{LR-PR-PY1996a, LR-PR-PY1996b}) made a fundamental advance in the study of $\RW_\lambda$'s. $\RW_\lambda$ has also received attention recently, see \cite{BG-HY-OS2013, AE2014, BG-FA-SV2014, HY-SZ2015} and the references therein. For a survey on biased random walks on random graphs 
see Ben Arous and Fribergh \cite{BG-FA2014}.

\medskip

This paper focuses on a specific properties of spectral radius of $\RW_\lambda$'s on non-random infinite graphs.
 {The uniform spanning forests of the network associated with $\RW_{\lambda}$ on the Euclidean lattices  are studied in a companion paper \cite{SSSWX2017b+}.}


\medskip

\medskip

Let us introduce some basic notation. Write
$$
p^{(n)}(x,\, y)
:=
p^{(n)}_\lambda(x,\, y)=\mathbb{P}_x(X_n=y),
$$

\noindent where $\mathbb{P}_x:=\mathbb{P}_x^{G}$ is the law of $\RW_\lambda$ starting at $x$. The Green function is given by
\[
\mathbb{G}(x,\, y\, |\, z)
:=
\mathbb{G}_\lambda(x,\, y\, | \, z)
=
\sum_{n=0}^\infty p^{(n)}(x,\, y)z^n,~ x,~y\in V(G),~z\in \mathbb{C},\ |z| < R_{\mathbb{G}} \, ,
\]

\noindent where $R_\mathbb{G}=R_\mathbb{G}(\lambda)=R_\mathbb{G}(\lambda,x,y)$ is its convergence radius. Note that
\[
R_\mathbb{G}
=
R_\mathbb{G}(\lambda)
=
\frac{1}{\limsup_{n\to \infty}\sqrt[n]{p^{(n)}(x,y)}}
\]

\noindent is independent of $x$, $y$ when $\RW_\lambda$ is irreducible, i.e., $\lambda>0$. When $\lambda=0$, $R_\mathbb{G}(0)=\infty$. Call
\[
\rho_\lambda
=
\rho(\lambda)
=
\frac{1}{R_\mathbb{G}}
=
\limsup_{n\to \infty} p^{(n)}(x,x)^{1/n}
=
\limsup_{n\to \infty} p^{(n)}(o,o)^{1/n}
\]

\noindent the spectral radius of $\RW_\lambda$.

%
%

\medskip

We are ready to state our first main result.

\begin{thm}\label{thm2.1}

 Let $G$ be a locally finite, connected infinite graph.
 
 \medskip

\noindent {\bf (i)} The spectral radius $\rho_\lambda$ is continuous in $\lambda\in
(0,\infty)$, {and} 
$\rho(\lambda_c) = 1$.

\smallskip

\noindent {\bf (ii)} If $\rho_{\lambda}$ is continuous at $0$, then there are no adjacent
vertices in $\partial B_G(n)$ for any $n\in\mathbb{N}$, and $d_v-d_v^{-}\ge 1$
for any vertex $v$. 

\smallskip

\noindent Conversely, on any infinite graph $G$, if for any $n\in\mathbb{N}$ there are no adjacent
vertices in $\partial B_G(n)$, and {if}
there exists
$\delta>0$ such that $d_v - d_v^- \ge \delta d_v$ for any vertex $v$, then $\rho_{\lambda}$ is
continuous at $0$.


\end{thm}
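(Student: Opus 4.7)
The plan is to handle the three claims in order; the main obstacle is the upper semi-continuity of $\rho_\lambda$ in part (i). For part (i), I would establish lower and upper semi-continuity separately. By reversibility and Chapman--Kolmogorov, $p^{(n+m)}(o,o)\ge p^{(n)}(o,o)\,p^{(m)}(o,o)$, so $\log p^{(2k)}_\lambda(o,o)$ is superadditive in $k$, and Fekete's lemma gives
\[
\rho_\lambda=\sup_{k\ge 1}\bigl(p^{(2k)}_\lambda(o,o)\bigr)^{1/(2k)}.
\]
Each $p^{(2k)}_\lambda(o,o)$ is a finite sum of products of one-step transitions, each a continuous function of $\lambda\in(0,\infty)$, so $\rho_\lambda$ is a pointwise sup of continuous functions, hence lower semi-continuous. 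For upper semi-continuity, I would approximate $\rho_\lambda$ by the finite-volume restrictions $P_\lambda^{N}=\chi_{B_G(N)}P_\lambda\chi_{B_G(N)}$; each $\rho_\lambda^{N}=\|P_\lambda^{N}\|$ is the spectral radius of a finite symmetric matrix (continuous in $\lambda$), and $\rho_\lambda^{N}\uparrow\rho_\lambda$ by density of finitely supported functions in $\ell^2(\pi_\lambda)$. The main obstacle is promoting this monotone pointwise convergence to locally uniform convergence on compact $\lambda$-intervals; I would try to achieve this via a Schur-type bound on the symmetrized kernel $\sqrt{p_\lambda(x,y)p_\lambda(y,x)}$, exploiting uniform continuity of its entries. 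Once continuity is established, $\rho(\lambda_c)=1$ is immediate: for every $\lambda>\lambda_c$ the walk $\RW_\lambda$ is recurrent, so $\sum_n p^{(n)}(o,o)=\infty$ forces $\rho_\lambda=1$; right-continuity at $\lambda_c$ then yields $\rho(\lambda_c)=1$.

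For the necessity in part (ii), assume $\rho_\lambda\to 0$ as $\lambda\downarrow 0$. If some $u,v\in\partial B_G(k)$ are adjacent, fix a geodesic $o=x_0,\ldots,x_k=u$ and consider closed paths of length $2k+2m$ of the form $o\to u\to(v\to u)^{m}\to\cdots\to o$, descending along the reversed geodesic. For $\lambda\in(0,1]$ the forward geodesic segment contributes at least $\alpha_k:=\prod_{i=0}^{k-1}1/d_{x_i}$ (uniformly in $\lambda$, since $p_\lambda(x_i,x_{i+1})\ge 1/d_{x_i}$), the horizontal oscillation contributes at least $(d_u d_v)^{-m}$ ($\lambda$-uniformly), and the descending segment contributes at least $\lambda^k\gamma_k$ with $\gamma_k=\prod_{i=1}^{k}1/d_{x_i}$. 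Hence $p^{(2k+2m)}_\lambda(o,o)\ge C_k\lambda^k(d_u d_v)^{-m}$; taking the $(2k+2m)$-th root and letting $m\to\infty$ gives $\rho_\lambda\ge (d_u d_v)^{-1/2}>0$, contradicting $\rho_\lambda\to 0$. The case of a dead-end $v$ with $d_v=d_v^-$ is analogous: the key point is that $p_\lambda(v,w)=\lambda/(\lambda d_v^-)=1/d_v^-$ is independent of $\lambda$ when $w$ is a parent of $v$, so bouncing between $v$ and $w$ gives the same kind of $\lambda$-uniform lower bound on $\rho_\lambda$.

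For the sufficiency direction, assume no horizontal edges and $d_v-d_v^-\ge\delta d_v$ for every $v$. The absence of horizontal edges forces every closed path $(o=x_0,\ldots,x_n=o)$ to have exactly $n/2$ downward and $n/2$ upward steps, so $n$ is even. Bounding each downward step by $\lambda/(d_{x_i}-d_{x_i}^-)\le \lambda/(\delta d_{x_i})$ and each upward step by $1/(d_{x_i}-d_{x_i}^-)\le 1/(\delta d_{x_i})$ yields
\[
\prod_{i=0}^{n-1} p_\lambda(x_i,x_{i+1})\le \lambda^{n/2}\prod_{i=0}^{n-1}\frac{1}{\delta d_{x_i}}=\Bigl(\frac{\sqrt{\lambda}}{\delta}\Bigr)^{n}\prod_{i=0}^{n-1}\frac{1}{d_{x_i}}.
\]
Summing over all closed paths of length $n$ recovers the SRW return probability, so
\[
p^{(n)}_\lambda(o,o)\le (\sqrt{\lambda}/\delta)^{n}\,p^{(n)}_{\mathrm{SRW}}(o,o)\le (\sqrt{\lambda}/\delta)^{n},
\]
whence $\rho_\lambda\le\sqrt{\lambda}/\delta\to 0$ as $\lambda\downarrow 0$.
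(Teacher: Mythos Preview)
Your treatment of part (ii) is essentially the paper's argument: both the horizontal-edge case and the dead-end case are handled by bouncing along a single edge inside a loop that starts and ends at $o$, and the sufficiency direction is identical (the paper writes the bound as $\mathbb{P}(\gamma,\lambda)\le \lambda^n\delta^{-2n}\mathbb{P}(\gamma,1)$ for $\gamma\in\Pi_{2n}$, which is your inequality).

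For part (i), your lower semi-continuity via Fekete's lemma is correct and is a cleaner alternative to the paper's Step~1, which uses Fatou's lemma on the first-return generating function $U_\lambda(o,o\,|\,z)$ to bound $\limsup_k R_{\mathbb{G}}(\lambda_k)$.

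The genuine gap is in upper semi-continuity. Your monotone approximation $\rho_\lambda^N\uparrow\rho_\lambda$ only exhibits $\rho_\lambda$ as a supremum of continuous functions, which gives lower semi-continuity again, not upper. You recognize this and propose to upgrade to locally uniform convergence, but the Schur-bound idea is not carried out, and there is no obvious way to make it work without already knowing continuity (Dini's theorem is circular here). The paper avoids this entirely by a one-line observation you missed: for $0<\lambda_1\le\lambda_2$ and any edge $v\sim u$,
\[
\frac{\lambda_1}{\lambda_2}\ \le\ \frac{p_{\lambda_1}(v,u)}{p_{\lambda_2}(v,u)}\ \le\ \frac{\lambda_2}{\lambda_1},
\]
uniformly over all edges. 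Multiplying along any closed path of length $n$ gives $p^{(n)}_{\lambda_1}(o,o)\le(\lambda_2/\lambda_1)^n p^{(n)}_{\lambda_2}(o,o)$ and symmetrically, whence $\rho_{\lambda_1}/\rho_{\lambda_2}\in[\lambda_1/\lambda_2,\lambda_2/\lambda_1]$. This single estimate yields full continuity on $(0,\infty)$ (indeed, Lipschitz continuity of $\log\rho_\lambda$ in $\log\lambda$) and also gives $\rho(\lambda_c)=1$ directly: if $R_{\mathbb{G}}(\lambda_c)>1$, the ratio bound would force $R_{\mathbb{G}}(\lambda)>1$ for $\lambda$ slightly above $\lambda_c$, contradicting recurrence. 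Once you have this edgewise ratio bound, both your Fekete argument and your finite-volume machinery become unnecessary for part (i).
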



\medskip

Let $d\in\mathbb{N}$, $d\ge2$, and $\mathcal{G}_d$ denotes the set of all $d$-regular infinite connected graphs. 

\smallskip

\begin{thm}\label{thm2.2}

 Let $G\in\mathcal{G}_d$, and $\lambda\in \left(0, \, \lambda_c(\mathbb{T}_d) = d-1\right)$. 
 
 \smallskip

{\bf (i)} We have
 $$
 \rho_G(\lambda)
 \ge
 \rho_{\mathbb{T}_d}(\lambda)
 =
 \frac{2\sqrt{(d-1)\lambda}}{d-1+\lambda} \, .
 $$

{\bf (ii)} Assume $G$ is transitive. Then
 $$
 \rho_G(\lambda)
 =
 \rho_{\mathbb{T}_d}(\lambda)
 \ \mbox{if and only if}\
 G
  \mbox{ is isomorphic to } \mathbb{T}_d.
 $$
\end{thm}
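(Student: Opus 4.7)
The plan is to exhibit an explicit positive sub-eigenfunction of $p_\lambda^G$ at the candidate eigenvalue $\rho_{\mathbb{T}_d}(\lambda)$, and to convert this into a lower bound on the spectral radius via a Rayleigh quotient with a truncated test function. Set $\mu := \sqrt{\lambda/(d-1)}$, so that $\mu \in (0,1)$ under the standing assumption $\lambda \in (0, d-1)$, and define $h(v) := \mu^{|v|}$ on $V(G)$. Writing $(p_\lambda^G h)(v) = \sum_{u \sim v} p_\lambda^G(v, u) h(u)$, splitting the sum into contributions from the $d_v^-$, $d_v^0$, $d_v^+$ classes of neighbours, and using the identity $\lambda/\mu = (d-1)\mu$, a short algebraic manipulation yields
\begin{equation*}
(p_\lambda^G h)(v) - \rho_{\mathbb{T}_d}(\lambda)\, h(v) \;=\; \frac{h(v)}{(d+\lambda-1)\,q(v)} \Big[\, \mu d (d-\lambda-1)(d_v^- - 1) + (d+\lambda-1)(1-\mu)\, d_v^0 \,\Big],
\end{equation*}
where $q(v) = d + (\lambda-1) d_v^-$. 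Because $d - \lambda - 1 > 0$ and $1 - \mu > 0$, the bracket is non-negative for every $v \neq o$ (using $d_v^- \ge 1$), and vanishes exactly when $d_v^- = 1$ and $d_v^0 = 0$. At the root the bracket equals $-\mu d (d-\lambda-1)$, giving the single pointwise deficit.

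For part (i) I would insert the truncation $h_R := h \cdot \id_{B_G(R)}$ into the Rayleigh quotient $\langle p_\lambda^G h_R, h_R\rangle_{\pi}/\|h_R\|^2_{\pi}$. The resulting expression equals $\rho_{\mathbb{T}_d}(\lambda)$ plus a correction whose numerator is the non-negative interior slack minus the bounded root-deficit and a boundary term at $\partial B_G(R)$; the boundary term is uniformly bounded in $R$ because $\sum_{|v|=R} d_v^+ \le d M_R \le d^2 (d-1)^{R-1}$, and the root deficit is a fixed constant. When $\|h_R\|^2_{\pi} \to \infty$ as $R \to \infty$ (the ``tree-like'' case, including $G = \mathbb{T}_d$), passing to the limit yields $\rho_G(\lambda) \ge \rho_{\mathbb{T}_d}(\lambda)$ directly; in the complementary $\ell^2$-regime (slowly growing $G$) the same conclusion follows from the general principle that for an irreducible reversible chain $P$ and a positive $h$ satisfying $Ph \ge \alpha h$ off a finite set, $\rho(P) \ge \alpha$.

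For part (ii) the ``if'' direction is immediate. For the converse, assume $G$ transitive with $\rho_G(\lambda) = \rho_{\mathbb{T}_d}(\lambda)$. If some vertex $v^\ast$ satisfied $d_{v^\ast}^- \ge 2$ or $d_{v^\ast}^0 \ge 1$, then by transitivity a positive density of such ``defective'' vertices would appear throughout $G$, and their aggregated strictly-positive slack in the Rayleigh quotient would dominate the uniformly bounded root and boundary corrections, yielding the strict improvement $\rho_G(\lambda) > \rho_{\mathbb{T}_d}(\lambda)$, a contradiction. Hence $d_v^- = 1$ and $d_v^0 = 0$ for every $v \neq o$. A short cycle argument finishes the proof: on any putative cycle the depth sequence $|v_i|$ changes by exactly $\pm 1$ between consecutive vertices (since $d_v^0 = 0$), and at a local minimum on the cycle the two neighbouring cycle-vertices would both be parents of some common neighbour, contradicting $d_v^- = 1$. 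Therefore $G$ is a $d$-regular tree, and by matching local structure at any vertex (or by transitivity) $G \cong \mathbb{T}_d$.

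The principal difficulty lies in the density-of-defectives argument in part (ii): for transitive $G \neq \mathbb{T}_d$ the growth of $G$ is strictly less than $d-1$, so a priori the slack sum and $\|h_R\|^2_{\pi}$ may be of comparable (bounded) order. One workaround is to average shifted test functions $h \circ \sigma^{-1}$ over the transitive $\mathrm{Aut}(G)$-action, effectively converting a single-vertex slack into a volume-proportional quantity while leaving the root-deficit and boundary terms bounded.
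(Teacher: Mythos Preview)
Your overall architecture---build a radial sub-eigenfunction at the value $\rho_{\mathbb{T}_d}(\lambda)$, truncate, plug into the Rayleigh quotient---matches the paper's. The gap is in the choice of test function and in the ``general principle'' you invoke.

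\textbf{The deficit at $o$ is fatal for $h(v)=\mu^{|v|}$.} With your $h$ one has $(Ph)(o)=\mu<\rho_{\mathbb{T}_d}(\lambda)$, so the Rayleigh quotient with $h_R$ carries a \emph{fixed} negative correction $-D/\|h_R\|_\pi^2$ in addition to the boundary term. When $\sum_n M_n(d-1)^{-n}<\infty$ (equivalently $h\in\ell^2(\pi)$, which happens whenever $G$ grows strictly slower than $\mathbb{T}_d$), this correction does not vanish, and your argument for (i) stalls. The ``general principle'' you cite---that a positive $h\in\ell^2$ with $Ph\ge\alpha h$ off a finite set forces $\rho(P)\ge\alpha$---is false. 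Counterexample: on $\mathbb{Z}_+$ with $p(n,n{+}1)=p>\tfrac12$, $p(n,n{-}1)=1-p$ for $n\ge1$ and $p(0,1)=1$, one has $\rho=2\sqrt{p(1-p)}$; take $h(n)=r^n$ with $r$ strictly below $\sqrt{(1-p)/p}$. Then $h\in\ell^2(\pi)$, $Ph=\alpha h$ on $\{1,2,\dots\}$ with $\alpha=pr+(1-p)/r>2\sqrt{p(1-p)}=\rho$, yet $\rho<\alpha$.

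\textbf{What the paper does instead.} The paper takes $f(x)=g(|x|)$ with
\[
g(n)=\Bigl(1+\tfrac{d-1-\lambda}{d-1+\lambda}\,n\Bigr)\mu^n,\qquad \mu=\sqrt{\lambda/(d-1)}.
\]
The linear prefactor is tuned so that $Pf(o)=\rho_{\mathbb{T}_d}(\lambda)f(o)$ \emph{exactly}, while for $v\neq o$ one still has $Pf(v)\ge\rho_{\mathbb{T}_d}(\lambda)f(v)$ with equality iff $d_v^-=1$, $d_v^0=0$. With no root-deficit the only error is the boundary term, and the polynomial factor makes the ratio $M_n g(n)^2\lambda^{-n}\big/\sum_{k\le n}M_k g(k)^2\lambda^{-k}$ tend to $0$ for \emph{every} $d$-regular $G$, giving (i) uniformly.

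\textbf{Part (ii).} Your averaging workaround does not work: $\RW_\lambda$ is \emph{not} $\mathrm{Aut}(G)$-invariant (the transition probabilities depend on $|\cdot|$, which is tied to the fixed root), so shifting $h$ by automorphisms does not produce new test functions for the same operator. The paper proceeds differently: it first proves that any transitive $G\in\mathcal{G}_d$ containing a cycle satisfies $\lambda_c(G)=\mathrm{gr}(G)<d-1$ (via a covering by a free-product graph $\mathbb{X}_{d,\ell}$). This guarantees $f\in L^2(G,\mu)$, and since $Pf(o)=\rho_{\mathbb{T}_d}(\lambda)f(o)$ with no deficit, a \emph{single} vertex $x$ with $d_x^->1$ or $d_x^0>0$ (which any cycle produces) already gives $(Pf,f)>\rho_{\mathbb{T}_d}(\lambda)(f,f)$, hence $\rho_G(\lambda)>\rho_{\mathbb{T}_d}(\lambda)$. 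Your final cycle-free argument (that $d_v^-=1$, $d_v^0=0$ everywhere forces $G\cong\mathbb{T}_d$) is fine, but the step before it needs the corrected test function.
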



In the case $\lambda=1$, Theorem \ref{thm2.2} follows from Kesten \cite[Theorem 2]{KH1959} (see \cite[p.~122 Corollary 11.7]{WW2000} and \cite[Theorem 6.11]{LR-PY2016}). {For this case ($\lambda=1$) our proof of Theorem \ref{thm2.2}  differs from the proofs  in \cite{KH1959}, \cite{WW2000} and \cite{LR-PY2016}. } 

The rest of the paper is organized as follows. 
{We prove Theorem~\ref{thm2.1} and \ref{thm2.2} in Section \ref{s:RW_general}. In Section \ref{sec3}, we focus on the spectral radius and the speed for $\RW_\lambda$'s on free product of graphs.
}

When emphasizing {that} a function $g(\cdot)$ depends on the underlying graph $G$, we will use $g_G(\cdot)$ or $g^G(\cdot)$ to replace $g(\cdot)$. \\

\section{Proofs of Theorem \ref{thm2.1} and Theorem \ref{thm2.2}}
\label{s:RW_general}

For any vertex set $A$, let
$$
\tau_A=\inf\{n\ge 0 \, |~X_n \in A\},
\qquad
\tau_A^+=\inf\{n\ge 1 \, |~X_n \in A\}.
$$

\noindent When $A=\{y\}$, write $\tau_y=\tau_{\{y\}}$, $\tau_y^{+}=\tau_{\{y\}}^{+}$. Put 
\begin{eqnarray}
 && f^{(n)}(x, \, y)
    :=
    f^{(n)}_\lambda(x, \, y)
    =
    \mathbb{P}_x (\tau_y^+=n),
    \label{fn}
    \\
 &&U(x, \, y\, |\, z)
    :=
    U(x, \, y \, | \, z)
    =
    \sum_{n=1}^\infty f^{(n)}(x,\, y)z^n,~x,~y\in V(G),
    \qquad
    z\in \mathbb{C},\ |z| <R_U ,
    \label{U}
\end{eqnarray}

\noindent where $R_U=R_U(\lambda)=R_U(\lambda,\, x,\, y)$ is the convergence radius of $U$, which is also independent of $x,y$ for $\lambda>0$. When $\lambda=0$, $R_U(0)=\infty$.

\subsection{Proof of Theorem \ref{thm2.1} part (i)}\label{sec2}




\begin{proof}
It suffices to verify that the convergence radius $R_{\mathbb{G}}(\lambda)$ is continuous in $\lambda\in (0, \, \infty)$. This is done in tree steps.

{\bf Step 1.} For any sequence $\{\lambda_k\}_{k\ge 1}\subset (0, \, \lambda_c(G)]$ converging to a limit $\lambda_0\in (0, \, \lambda_c(G)]$, we claim that
$$
\limsup_{k\to\infty} R_{\mathbb{G}}(\lambda_k)
\le
R_{\mathbb{G}}(\lambda_0).
$$
Assume
$$\limsup_{k\to\infty} R_{\mathbb{G}}(\lambda_k)
>
R_{\mathbb{G}}(\lambda_0)=z_*.
$$

\noindent then there exists a subsequence $\{\lambda_{n_k}\}_{k\ge 1}$, such that $a=\lim_{k\to \infty}R_{\mathbb{G}}(\lambda_{n_k})>z_*$.
For any $z_0\in (z_*, \, a)$, when $k$ is sufficiently large,
$$
U_{\lambda_{n_k}}(o, \, o \, | \, z_0)
=
\sum_{n={1}}^{\infty}f_{\lambda_{n_k}}^{(n)}(o,\, o)z_0^n
<1 ,
$$

\noindent because $\mathbb{G}_{\lambda_{n_k}}(o, \, o \, | \, z_0)=\sum_{n=0}^{\infty}\{U_{\lambda_{n_k}}(o, \, o \, | \, z_0)\}^n<\infty$. By Fatou's lemma,
\begin{equation}
  \label{(2.1)}
  1\ge \liminf_{k\to\infty}\sum_{n={1}}^{\infty}f_{\lambda_{n_k}}^{(n)}(o,o)z_0^n\ge
    \sum_{n={1}}^{\infty}\liminf_{k\to\infty}f_{\lambda_{n_k}}^{(n)}(o,o)z_0^n
=\sum_{n={1}}^{\infty}f_{\lambda_{0}}^{(n)}(o,o)z_0^n =U_{\lambda_0}(o, \, o \, | \, z_0).
\end{equation}

We now distinguish two possible cases. First case: $R_{\mathbb{G}}(\lambda_0)=R_U(\lambda_0)$. Since $z_0 > z_* = R_{\mathbb{G}}(\lambda_0)$, we would have $U_{\lambda_0}(o, \, o \, | \, z_0)=\infty$, leading to a contradiction. Second case: $R_{\mathbb{G}}(\lambda_0)<R_U(\lambda_0)$. For any $z>z_*$, $\infty= \mathbb{G}_{\lambda_0}(o, \, o \, | \, z)=\sum_{n=0}^{\infty}\{U_{\lambda_0}(o, \, o \, | \, z)\}^n$, so $U_{\lambda_0}(o, \, o \, | \, z) \ge 1$. Since $U_{\lambda_0}(o, \, o \, | \, z)$ is strictly increasing in $z\in [0, \, R_U(\lambda_0))$, this would again contradict (\ref{(2.1)}).

{\bf Step 2.} We prove in this step that $\liminf_{k\to\infty}
R_{\mathbb{G}}(\lambda_k) \ge R_{\mathbb{G}}(\lambda_0)$ for any sequence
$\{\lambda_k\}_{k\ge 1}$ converging to a limit $\lambda_0 \in (0, \, \infty)$.

For any $n\in\mathbb{Z}_{+}$, let
\begin{eqnarray*}
    \Pi_n
 &=& \{\mbox{paths}\ \gamma\ \mbox{in}\ G\ \mbox{staring and ending at}\ o\ \mbox{with length}\ n\},
    \\
    \mathbb{P}(\gamma, \, \lambda)
 &=& \prod_{i=0}^{n-1}p_{\lambda}(w_i,w_{i+1}),\ \gamma=w_0w_1\cdots w_n\in \Pi_n \, .
\end{eqnarray*}

Note that for $0 < \lambda_1 \le \lambda_2 < \infty$ and $v \sim u$ we
have
\begin{equation}
  \label{e:tfratio}
  \frac{\lambda_1}{\lambda_2}
  \le
  \frac{p_{\lambda_1}(v,\, u)}{p_{\lambda_2}(v, \, u)}
  \le
  \frac{\lambda_2}{\lambda_1} \, .
\end{equation}

\noindent Thus, for any $\delta > 0$, there is a constant $\varepsilon > 0$
such that $p_{\lambda}(v,\, u) \le (1+\delta) p_{\lambda_0}(v,\, u)$ for $\lambda \in (\lambda_0-\varepsilon, \, \lambda_0+\varepsilon)$. Consequently, we have $\mathbb{P}(\gamma, \, \lambda) \le (1+\delta)^n \mathbb{P}(\gamma, \, \lambda_0)$ for $\gamma \in \Pi_n$ and
$$
p^{(n)}_{\lambda}(o, \, o)
=
\sum_{\gamma \in \Pi_n} \mathbb{P}(\gamma, \, \lambda)
\le
\sum_{\gamma \in \Pi_n} (1+\delta)^n(\gamma, \, \lambda_0)
=
(1+\delta)^n p^{(n)}_{\lambda_0}(o, \, o).
$$

\noindent Therefore we have for $k$ large enough,
$$
\mathbb{G}_{\lambda_k}(o, \, o\, | \, z)
=
\sum_{n=0}^{\infty} p^{(n)}_{\lambda_k}(o,\, o) z^n
\le
\sum_{n=0}^{\infty} p^{(n)}_{\lambda_0}(o,\, o) \left( (1+\delta)z \right)^n
<
\infty,
$$

\noindent provided $(1+\delta)z < R_{\mathbb{G}}(\lambda_0)$.
Since $\delta$ is arbitrary, we have that $\liminf_{k\to\infty}
R_{\mathbb{G}}(\lambda_k) \ge R_{\mathbb{G}}(\lambda_0)$.

\textbf{Step 3.} It remains to prove $R_{\mathbb{G}}(\lambda_c) = 1$. Suppose
$R_{\mathbb{G}}(\lambda_c) > 1$, then for $\lambda>\lambda_c$ and $z>1$ with
$1<\frac{\lambda z}{\lambda_c} < R_{\mathbb{G}}(\lambda_c)$, we would have from
\eqref{e:tfratio} that
$$
\sum_{n=0}^{\infty} p^{(n)}_{\lambda}(o,\, o) z^n
\le
\sum_{n=0}^{\infty} p^{(n)}_{\lambda_c}(o,\, o) \left( \frac{\lambda z}{\lambda_c}
\right)^n
<
\infty.
$$

\noindent Then $R_{\mathbb{G}}(\lambda) > 1$. This contradicts to the fact that $\mathrm{RW}_{\lambda}$ is recurrent for $\lambda > \lambda_c$.


\bigskip


\subsection{Proof of Theorem \ref{thm2.1} part (ii)}
{We split the proof of (ii) into three steps. }

\medskip

 \noindent  {\bf Step 1.} For any given locally finite, connected infinite graph $G$, such that $\partial B_G(n_0)$ contains adjacent vertices for some $n_0$ we prove that $\rho_\lambda$ is not continuous at $0$.

Let $u$ and $v$ be adjacent vertices in $\partial B_G(n_0)$. Let $e=\{u, \, v\}$ and $x_0=o$. For $\RW_\lambda$ (with $\lambda>0$) to return to $o$, it suffices to walk first along a path $\gamma=x_0x_1\cdots x_{n_0}$ of length $n_0$   to a vertex $u\in \partial B_G(n_0)$ in $n_0$ steps, then walk $2n$ steps between $u$ and $v$, and finally returns to $o$ from $u$ along $\widetilde{\gamma}=x_{n_0}x_{n_0-1}\cdots x_1x_0$. Accordingly,
\begin{equation}
    p_{\lambda}^{(2n+2n_0)}(o, \, o)
    \ge
    \mathbb{P}(\gamma,\, \lambda) \, \mathbb{P}(\widetilde{\gamma}, \,\lambda)
    \left(\frac{1}{d_u+(\lambda-1)d_u^{-}}\right)^n
    \left(\frac{1}{d_v+(\lambda-1)d_v^{-}}\right)^n,
    \label{(2.2)}
\end{equation}

\noindent where for any $\lambda>0$,
$$
\mathbb{P}(\gamma, \, \lambda)
=
\prod_{i=0}^{n_0-1}p_{\lambda}(x_i,\, x_{i+1})>0,
\qquad
\mathbb{P}(\widetilde{\gamma}, \, \lambda)
=
\prod_{i=0}^{n_0-1}p_{\lambda}(x_{n_0-i}, \, x_{n_0-i-1})>0.
$$

\noindent So for any $\lambda>0$,
$$
\rho_\lambda
\ge
\limsup_{n\to\infty} \left\{p_{\lambda}^{(2n+2n_0)}(o, \, o)\right\}^{\frac{1}{2n+2n_0}}
\ge
\frac{1}{\{[d_u+(\lambda-1)d_u^{-}] \, [d_v+(\lambda-1)d_v^{-}] \}^{1/2}}>0.
$$

\noindent Letting $0<\lambda\to 0$, we immediately get
$$
\liminf_{\lambda\to 0+}\rho_\lambda
\ge
\frac{1}{[(d_u-d_u^{-})(d_v-d_v^{-})]^{1/2}}
>
0
=
\rho_0.
$$

\noindent \textbf{Step 2.} Assume that there is a vertex $v$ such that $d_v - d_v^- = 0$. Let $u$ be a vertex adjacent to $v$. Let $\gamma$ be a path from $o$
to $u$ of length $n_0$, and denote by $\tilde{\gamma}$ the reverse path. Similar to the arguments in the previous step, we have for any $n$,
$$
p^{(2n_0 + 2n)}_{\lambda}(o, \, o)
\ge
\mathbb{P}(\gamma, \, \lambda) \mathbb{P}(\tilde{\gamma}, \, \lambda) \Big( \frac{1}{d_u +
    (\lambda-1)d_u^-} \Big)^n \Big( \frac{1}{d_v} \Big)^n.
$$

\noindent Then for any $\lambda > 0$,
$$
\rho_{\lambda}
\ge
\Big( \frac{1}{d_v(d_u+(\lambda-1)d_u^-)} \Big)^{1/2}
>
0.
$$

\noindent Hence $\rho_{\lambda}$ is not continuous at $0$.

\noindent {\bf Step 3.} Assume that there are no adjacent
vertices in $\partial B_G(n)$ for any $n\in\mathbb{N}$, and there exists
$\delta>0$ such that $d_v - d_v^- \ge \delta d_v$ for any vertex $v$.
Then for any $\lambda>0$ and the $\RW_\lambda\ (X_n)_{n=0}^\infty$, the following holds almost surely:
\begin{eqnarray}\label{(2.3)}
    |X_{n+1}| - |X_n| \in \{+1,-1\},
    \qquad
    \forall n\in\mathbb{Z}_{+}.
\end{eqnarray}

\noindent When $X_0=o$, the walk $(X_n)_{n=0}^\infty$ takes an even number (say, $2m$, for some $m\ge 1$) of steps to return to $o$: Among these $2m$ steps, $m$ steps are upward and the other $m$ steps are downward.

When $v\sim u$ and $|u| = |v| -1$, we have
$$
p_{\lambda}(v,\, u)
=
\frac{\lambda}{d_v+(\lambda-1)d_v^{-}}
\le
\frac{\lambda}{d_v-d_v^{-}}
\le
\lambda d_v^{-1} \delta^{-1},
\qquad
\lambda>0.
$$

\noindent When $v\sim u$ and $|u| = |v|+1$, we have $p_{\lambda}(v,\, u) \le
d_v^{-1} \delta^{-1}$. Hence for any path $\gamma=w_0w_1\cdots w_{2n}\in\Pi_{2n}$,
$$
\mathbb{P}(\gamma, \, \lambda)
=
\prod_{i=0}^{2n-1}p_{\lambda}(w_i, \, w_{i+1})
\le
\lambda^n \delta^{-2n} \mathbb{P}(\gamma,1),
\qquad
\lambda>0,
$$

\noindent which implies that for any $\lambda>0$,
$$
p_\lambda^{(2n)}(o, \, o)
=
\sum_{\gamma\in\Pi_{2n}}\mathbb{P}(\gamma, \, \lambda)
\le
\lambda^n \delta^{-2n} \sum_{\gamma\in\Pi_{2n}} \mathbb{P}(\gamma,1)
\le
\lambda^n \delta^{-2n} p^{(2n)}_1(o,o).
$$

\noindent Hence
$$
\rho_\lambda
=
\limsup_{n\to\infty}\left\{p_\lambda^{(2n)}(o,\, o)\right\}^{\frac{1}{2n}}
\le
\delta^{-1} \rho_1 \lambda^{1/2},
$$

\noindent proving that $\lim_{\lambda\to 0+}\rho_\lambda=0=\rho_0$.
\end{proof}

\subsection{Proof of Theorem \ref{thm2.2}}

We start with the lemma, which {will be used in the proof of Theorem~\ref{thm2.2}.  For readers' convenience we provide the proof in Appendix~\ref{s:pfthm2.2}.} 

\begin{lem}\label{lem2.6}
For the $d$-regular tree $\mathbb{T}_d$, the following holds:
$$
\theta_{\mathbb{T}_d}(\lambda)
=
\frac{\lambda}{d-1},
\qquad
\rho_{\mathbb{T}_d}(\lambda)
=
\frac{2\sqrt{(d-1)\lambda}}{d-1+\lambda},
\qquad
\lambda\in [0, \, \lambda_c(\mathbb{T}_d)]=[0,\, d-1],
$$
and for $\lambda\in (0, \, \infty)$ and $n\to \infty$,
\begin{equation}
    f^{(2n)}_\lambda(o, \, o)
    \sim
    \frac{1}{\sqrt{\pi}} \left( \frac{2\sqrt{(d-1)\lambda}}{d-1+\lambda}\right)^{\!\! 2n} n^{-3/2}.
    \label{f_regular_tree}
\end{equation}
Moreover,
\begin{equation}
    p^{(2n)}_\lambda(o,o)
    \sim
    \begin{cases}
          \frac{(d-1-\lambda)^2}{16(\pi\lambda)^{1/2}(d-1)^{3/2}}\rho_{\mathbb{T}_d}(\lambda)^{2n}n^{-3/2}&\hbox{if}\ \lambda\in (0, \, d-1),
      \\
    \frac{1}{\sqrt{\pi n}}&\hbox{if}\ \lambda=d-1. 
    \end{cases}
    \label{p_regular_tree}
\end{equation}
\end{lem}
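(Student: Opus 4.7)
The plan is to reduce the whole problem to a one-dimensional birth-death chain on $|X_n|$, solve explicitly for the first-passage generating function by first-step analysis, and then apply standard singularity analysis to extract the asymptotics. On $\mathbb{T}_d$ every non-root vertex has one parent and $d-1$ children, so by (\ref{(1.1)}) the biased walk at any $v\neq o$ moves to its parent with probability $q:=\lambda/(d-1+\lambda)$ and to each child with probability $1/(d-1+\lambda)$, giving a total outward probability $p:=(d-1)/(d-1+\lambda)=1-q$; from the root each neighbor is chosen with probability $1/d$. Letting $F(z)$ be the common first-hitting-parent generating function $U(v,\mathrm{parent}(v)\,|\,z)$ (independent of $v\neq o$ by self-similarity), a first-step decomposition gives
\[
F(z) = qz + pz\,F(z)^2,
\qquad
F(z) = \frac{1-\sqrt{1-4pq\,z^2}}{2pz},
\]
and conditioning on the uniform first step from $o$ yields
\[
U(o,o\,|\,z) = zF(z) = \frac{1-\sqrt{1-4pq\,z^2}}{2p},
\qquad
\mathbb{G}(o,o\,|\,z) = \frac{2p}{2p-1+\sqrt{1-4pq\,z^2}}.
\]

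From these closed forms the two scalar identities drop out. The return-to-parent probability is $\theta_{\mathbb{T}_d}(\lambda) = F(1) = (1-|1-2p|)/(2p)$, which for $\lambda\le d-1$ (i.e.\ $p\ge 1/2$) simplifies to $\lambda/(d-1)$. The algebraic branch point of $U$ sits at $z_\star:=(d-1+\lambda)/(2\sqrt{(d-1)\lambda}) = 1/\sqrt{4pq}$; to identify this as $1/\rho_{\mathbb{T}_d}(\lambda)$ I would verify that in the subcritical regime $\lambda\in(0,d-1)$ the denominator $2p-1+\sqrt{1-4pq\,z^2}$ stays strictly positive on $[0,z_\star]$, so that no pole of $\mathbb{G}$ appears before the branch point, giving $\rho_{\mathbb{T}_d}(\lambda)=2\sqrt{(d-1)\lambda}/(d-1+\lambda)$; at the critical value $\lambda=d-1$ one has $z_\star=1$ and $\rho_{\mathbb{T}_d}(\lambda_c)=1$, matching Theorem~\ref{thm2.1}(i).

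Both asymptotics will then follow from singularity analysis in the variable $w:=z^2$, writing $a:=4pq$. For \eqref{f_regular_tree}: $\tilde U(w):=(1-\sqrt{1-aw})/(2p)$ has a unique square-root branch singularity at $w=1/a$, and the transfer theorem --- equivalently, Stirling's formula applied to $\binom{1/2}{n}$ --- gives $[w^n]\sqrt{1-aw}\sim -a^n n^{-3/2}/(2\sqrt{\pi})$, from which $f^{(2n)}_\lambda(o,o)=[w^n]\tilde U(w)$ has the claimed leading order. For \eqref{p_regular_tree} I would split into the subcritical and critical cases: when $\lambda<d-1$, $2p-1>0$ and the expansion of $\mathbb{G}$ near $w=1/a$ is $\frac{2p}{2p-1}-\frac{2p}{(2p-1)^2}\sqrt{1-aw}+O(1-aw)$, so the transfer theorem applied to the singular square-root term produces the subcritical constant; when $\lambda=d-1$, $2p-1=0$ and $a=1$ so $\mathbb{G}(z)$ collapses to $1/\sqrt{1-z^2}$ and $p^{(2n)}_\lambda(o,o)=\binom{2n}{n}/4^n\sim 1/\sqrt{\pi n}$ follows directly from Stirling.

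The step most likely to require care is confirming that $z_\star$ really is the unique dominant singularity of both $U$ and $\mathbb{G}$ on the boundary of the disk of convergence (so that the transfer theorem pins down the leading constant, not just an upper bound) and checking analyticity in a suitable indented disk at $w=1/a$. The critical case is algebraically degenerate --- the subcritical expansion has $2p-1$ in the denominator and therefore blows up as $\lambda\uparrow d-1$ --- so the passage to that regime must be carried out directly on the collapsed formula $\mathbb{G}(z)=1/\sqrt{1-z^2}$ rather than as a limit of the subcritical asymptotic.
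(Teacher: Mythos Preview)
Your proposal is correct and follows essentially the same route as the paper: both reduce to the radial birth--death chain, derive the same closed form
\[
U_\lambda(o,o\,|\,z)=\frac{1-\sqrt{1-4pq\,z^2}}{2p},\qquad
\mathbb{G}_\lambda(o,o\,|\,z)=\frac{2p}{2p-1+\sqrt{1-4pq\,z^2}},
\]
read off $\theta$ and $\rho$ from it, and treat the critical case $\lambda=d-1$ by collapsing $\mathbb{G}$ to $(1-z^2)^{-1/2}$.

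The only substantive difference is in how the asymptotics are extracted. For \eqref{f_regular_tree} the paper writes $f^{(2n)}_\lambda(o,o)=c_{n-1}p^{n-1}q^n$ with $c_{n-1}$ the Catalan number and applies Stirling directly, whereas you read the coefficient off the square-root branch via the transfer theorem; these are equivalent. For \eqref{p_regular_tree} in the subcritical range the paper recasts $\mathbb{G}$ as an implicit function $\Psi(u,v)=\Phi(uv)-v=0$ and invokes Bender's Darboux theorem, while you expand $\mathbb{G}$ explicitly around the branch point $w=1/a$ and again transfer. Your route is the more direct one here, since $\mathbb{G}$ is already given in closed form and no implicit-function machinery is needed; the paper's detour through $\Psi$ is set up so that the same template can be reused later (e.g.\ in Theorem~\ref{thm3.1}(iii)) where no closed form is available. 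Your caveat about checking that $w=1/a$ is the unique dominant singularity and that $\mathbb{G}$ extends to a $\Delta$-domain is exactly right; for this algebraic function both are immediate, and working in $w=z^2$ rather than $z$ neatly sidesteps the $2$-periodicity (two conjugate singularities at $\pm z_\star$) that one would otherwise have to account for.
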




\medskip

\noindent {Now we are ready to give the proof of Theorem~\ref{thm2.2}.}

\begin{proof}[Proof of Theorem~\ref{thm2.2}]
  (i) 
  Fix $\lambda \in (0, \, \lambda_c(\mathbb{T}_d)]$. Define $g=g_\lambda:\ \mathbb{Z}_{+}\to \mathbb{R}$ by
$$
g(n)
=
g_\lambda(n)
:=
\left(1+\frac{d-1-\lambda}{d-1+\lambda}n\right)((d-1)/\lambda)^{-n/2},
$$

\noindent and $f=f_\lambda:\ G\to\mathbb{R}$ by
\begin{eqnarray}
\label{(2.7)}
    f(x):=f_\lambda(x)=g(|x|),
    \qquad
    \forall x\in V(G).
\end{eqnarray}

\noindent Clearly, $g$ is non-increasing on $\mathbb{Z}_{+}$.

Recall $p(x,\, y)$ from (\ref{(1.1)}). For any $h:\ G\to\mathbb{R}$, let
\begin{equation}
    \label{(2.8)}
    Ph(x)
    :=
    \sum_{y\sim x} p(x,\, y) h(y),
    \qquad
    x\in V(G).
\end{equation}

\noindent Then $Pf(o)=\rho_{\mathbb{T}_d}(\lambda)f(o)$, and for $x\not=o$,
\begin{align*}
    Pf(x)
 &=\frac{d_x^+g(|x|  +1)+d_x^0g(|x|)+\lambda d_x^-g(|x|-1)}{d_x^++d_x^0+\lambda d_x^{-}}
    \\
    \,
    \\
 &\ge \frac{(d_x^+ + d_x^0) g(|x|+1)+ \lambda d_x^-g(|x|-1)}{d_x^++d_x^0+\lambda d_x^{-}} .
\end{align*}

\noindent Since $g(|x|-1) \ge g(|x|+1)$ and $d_x^- \ge 1$ (so $d_x^+ + d_x^0 \le d-1$), this leads to:
\begin{equation}
    Pf(x)
    \ge
    \frac{(d-1)g(|x|+1)+ \lambda g(|x|-1)}{d-1+\lambda}
    =
    \rho_{\mathbb{T}_d}(\lambda) f(x),
    \qquad
    x\not=o.
    \label{Pf>}
\end{equation}

\noindent For further use, we notice that for $x\not=o$, if $Pf(x) = \rho_{\mathbb{T}_d}(\lambda) f(x)$, then $d_x^- =1$, $d_x^0=0$ and $d_x^+ = d-1$.

For any $n\in\mathbb{N}$, put $f_n := f\, I_{B_G(n)}$. For $x\in B_G(n)$,
$$
Pf_n(x)
=
Pf(x)-\frac{d_x^+g(n+1)}{d_x^++d_x^0+\lambda d_x^-}I_{\{|x| =n\}}.
$$
Define $\mu$ as follows: $\mu(o)=d_o$ and $\mu(x) = (d_x^{+}+d_x^0+\lambda d_x^-)\lambda^{-|x| }$ for $x\not=o$. Let $M_n := |\partial B_G(n)|$ as before. Denote by $(\, \cdot\, , \, \cdot\, )$ the inner product of $L^2(G,\mu)$. Then
$$
(Pf_n, \, f_n)
=
\sum_{x\in B_G(n)}Pf(x) f(x) \mu(x)
-
\sum_{x\in\partial B_G(n)} \frac{d_x^+g(n+1)}{d_x^++d_x^0+\lambda d_x^-}f(x)\mu(x) \, .
$$

\noindent For the sum $\sum_{x\in B_G(n)}$ on the right-hand side, we observe that by \eqref{Pf>}, for $x\in B_G(n)$, $Pf(x) \le \rho_{\mathbb{T}_d} (\lambda) f(x) = \rho_{\mathbb{T}_d} (\lambda) f_n(x)$. For the sum $\sum_{x\in\partial B_G(n)}$, we note that for $x\in\partial B_G(n)$, since $d_x^+ \le d-1$ and $f(x) = g(n)$, we have $\frac{\mu(x)}{d_x^++d_x^0+\lambda d_x^-} = \lambda^{-n}$. Accordingly,
\begin{align*}
    (Pf_n, \, f_n)
 &\ge \rho_{\mathbb{T}_d}(\lambda)(f_n,\, f_n)
    -
    (d-1)M_n \, g(n) g(n+1) \, \lambda^{-n}
 &\ge \rho_{\mathbb{T}_d}(\lambda)(f_n,\, f_n)
    -
    (d-1)M_n \, g(n)^2\lambda^{-n} ,
\end{align*}

\noindent which implies that
$$
\rho_G(\lambda)
=
\sup_{h\in L^2(G,\mu)\setminus\{0\}} \frac{(Ph,\, h)}{(h,\, h)}
\ge
\frac{(Pf_n,\, f_n)}{(f_n,\, f_n)}
\ge
\rho_{\mathbb{T}_d}(\lambda)
-
\frac{(d-1)M_n \, g(n)^2\lambda^{-n}}{(f_n,\, f_n)} \, .
$$

\noindent Observe that
\begin{align*}
    (f_n, \, f_n)
 &=\sum_{k=0}^n \sum_{x\in\partial B_G(k)} g(k)^2 \mu(x)
    =
    \sum_{k=0}^n\sum_{x\in\partial B_G(k)} g(k)^2\, (d_x^++d_x^0+\lambda d_x^-) \, \lambda^{-|x|}
    \\
 &\ge (\lambda\wedge 1) d\, \sum_{k=0}^n M_k\, g(k)^2 \, \lambda^{-k}.
\end{align*}

\noindent Hence
$$
\rho_G(\lambda)
\ge
\rho_{\mathbb{T}_d}(\lambda)
-
\frac{d-1}{d(\lambda\wedge 1)} \frac{M_n\, g(n)^2\, \lambda^{-n}}{\sum_{k=0}^n M_k\, g(k)^2\, \lambda^{-k}}.
$$

\noindent It remains to prove that
$$
\lim_{n\to\infty} \frac{M_n \, g(n)^2 \, \lambda^{-n}}{\sum_{k=0}^n M_k \, g(k)^2 \, \lambda^{-k}}=0.
$$

For $k\le n$,
$$
M_n \, g(n)^2\, \lambda^{-n}
\le
M_k \, (d-1)^{n-k}g(n)^2\, \lambda^{-n}
=
M_k \, g(k)^2\, \lambda^{-k}\left(\frac{(d-1-\lambda)n+d-1+\lambda}{(d-1-\lambda)k+d-1+\lambda}\right)^2,
$$

\noindent which implies that
$$
\frac{\sum_{k=0}^n M_k \, g(k)^2 \, \lambda^{-k}}{M_n \, g(n)^2 \, \lambda^{-n}}
\ge
\sum_{k=0}^n \left(\frac{(d-1-\lambda)k+d-1+\lambda}{(d-1-\lambda)n+d-1+\lambda}\right)^2 .
$$

\noindent Since $\lambda \le d-1$, the sum on the right-hand side goes to infinity as $n\to \infty$.

\bigskip

(ii)
For $d=2$, $\mathcal{G}_d=\{\mathbb{T}_2\}$, the result holds trivially. So we assume $d\ge 3$. It suffices to prove that for any transitive $G\in\mathcal{G}_d$ with the minimal cycle length $\ell\ge 3$,
\begin{equation}
    \label{(2.9)}
    \rho_G(\lambda)>\rho_{\mathbb{T}_d}(\lambda),
    \qquad
    \forall\lambda\in (0, \, \lambda_c(\mathbb{T}_d)).
\end{equation}

{\bf Step 1.} $\lambda_c(G)<\lambda_c(\mathbb{T}_d)=d-1$.

Let $\Gamma_{d,\ell} := \langle a_1, \ldots, a_{d-2}, \, b\, | \, a_i^2=1,\ b^\ell =1\rangle$
be a finitely-presented group with generating set $S= \{a_1, \ldots, a_{d-2},\, b, \, b^{-1} \}$, and $\mathbb{X}_{d,\ell} := (\mathbb{Z}_2*\cdots *\mathbb{Z}_2) \, (d-2 \hbox{ \rm folds}\, )*\mathbb{Z}_\ell$
the corresponding Cayley graph; then the transitive graph $G$ is covered by $\mathbb{X}_{d,\ell}$  (see Theorem 11.6 of \cite{WW2000}).  From this result, we obtain
$$
\lambda_c(G)
=
\mathrm{gr}(G)
\le
\mathrm{gr}(\mathbb{X}_{d,\ell}).
$$

For $z\ge 0$, define
\begin{eqnarray*}
 &&k_\ell(z)=\left\{\begin{array}{ll}
  2z+2z^2+\cdots+2z^{\frac{\ell-1}{2}},\ &\ \mbox{if}\ \ell\ \mbox{is odd},\\
  2z+2z^2+\cdots+2z^{\frac{\ell-2}{2}}+z^{\frac{\ell}{2}}, &\ \mbox{if}\ \ell\ \mbox{is even};
  \end{array}
 \right.\\
 &&h_\ell(z)=\frac{(d-2)z}{1+z}+\frac{k_\ell(z)}{1+k_\ell(z)}.
\end{eqnarray*}

\noindent Then $\mathrm{gr}(\mathbb{X}_{d,\ell})=\frac{1}{z_*}$ where $z_*$ is the unique positive number satisfying $h_\ell(z_*)=1$ (see \cite{CE-GL-MS2012} p.~28; it will also be recalled in more details in (\ref{(3.1)}) below). Since $j_\ell := \frac{k_\ell (\frac{1}{d-1})}{1+k_\ell (\frac{1}{d-1})}$ is strictly increasing in $\ell$, and $\lim_{r\to\infty} j_r = \frac2d$, we have $j_\ell<\frac2d$, which implies $h_\ell (\frac{1}{d-1})<1$. Notice that $h_\ell(z)$ is strictly increasing in $z\ge 0$. So $z_*>\frac{1}{d-1}$ and $\mathrm{gr}(\mathbb{X}_{d,\ell}) = \frac{1}{z_*} <d-1$, which implies $\lambda_c(G) < d-1$.

{\bf Step 2.} Fix $\lambda\in (0,\, d-1)$. Let as before $\mu(o) := d_o$ and $\mu(x) := (d_x^{+}+d_x^0+\lambda d_x^-)\lambda^{-|x|}$ if $x\not=o$. Let $f:\ G\to\mathbb{R}$ be the function defined in (\ref{(2.7)}). Then $f\in L^2(G,\, \mu)$.

Since $G$ is transitive, $\lambda_c (G) = \mathrm{gr} (G) = \lim_{n\to \infty} M_n^{1/n}$. By Step 1, for any $\varepsilon \in (0, \, d-1-\lambda_c(G))$, there is a constant $c_\varepsilon > 0$ such that
$$
M_n
\le
c_\varepsilon\, (\lambda_c(G)+\varepsilon)^n,
\qquad
\forall n\ge 0.
$$

\noindent Thus
\begin{align*}
    \sum_{x\in V(G)} f^2(x) \, \mu(x)
 &= \sum_{x\in V(G)} \left(1+\frac{d-1-\lambda}{d-1+\lambda} |x|\right)^2 \left(\frac{\lambda}{d-1}\right)^{|x|} \left( d_x^++d_x^0+\lambda d_x^{-}\right) \lambda^{-|x|}
    \\
 &\le (\lambda\vee 1)d\, \sum_{n=0}^\infty M_n\left(1+\frac{d-1-\lambda}{d-1+\lambda}n\right)^2\left(\frac{1}{d-1}\right)^n\\
     &\le (\lambda\vee 1)dc_\varepsilon \, \sum_{n=0}^\infty
         \left(\frac{\lambda_c(G)+\varepsilon }{d-1}\right)^n\left(1+\frac{d-1-\lambda}{d-1+\lambda}n\right)^2\\
     &<\infty.
\end{align*}

{\bf Step 3.} (\ref{(2.9)}) is true.

Let $\lambda\in (0, \, \lambda_c(\mathbb{T}_d))$. We have noticed in the proof of 
{(i)} 
that $Pf (o)=\rho_{\mathbb{T}_d}(\lambda)f (o)$ and that for $x\not=o$,
$$
Pf(x)\ge \rho_{\mathbb{T}_d}(\lambda)f(x),
\ \mbox{and}\ ``="\ \mbox{implies}\ d_x^{-}=1, \, d_x^0=0, \, d_x^+=d-1.$$

\noindent Since the transitive $G$ has the minimal cycle length $\ell\ge 3$, we cannot have $d_x^-=1$, $d_x^0=0$, $d_x^+=d-1$ for any $x\in V(G)\setminus\{o\}$. Note that $f(\cdot)$ and $\mu(\cdot)$ are strictly positive on $G$. Hence
$$
(Pf, \, f)
=
\sum_{x\in V(G)}Pf(x) f(x) \mu(x)
>
\sum_{x\in V(G)}\rho_{\mathbb{T}_d}(\lambda)f^2(x)\mu(x)
=
\rho_{\mathbb{T}_d}(\lambda)(f, \, f).
$$

\noindent By Step 2, $f\in L^2(G, \, \mu)$, which implies that
$$
\rho_G(\lambda)
=
\sup_{h\in L^2(G,\mu)\setminus\{0\}}\frac{(Ph,\, h)}{(h,\, h)}
\ge
\frac{(Pf,\, f)}{(f,\, f)}
>
\rho_{\mathbb{T}_d}(\lambda),
$$

\noindent proving (\ref{(2.9)}). 
\end{proof}

\bigskip

 Since for some $G\in\mathcal{G}_d$ 
that are not trees, one may have $\mathrm{gr}(G)=d-1$, in general it is not true that $f\in L^2(G, \, \mu)$ for $\lambda\in (0, \, d-1)$. However, for any transitive graph $G\in\mathcal{G}_d$ that is not isomorphic to $\mathbb{T}_d$, we have $\mathrm{gr}(G)<d-1$, which ensures $f\in L^2(G,\mu)$ in the proof of Theorem~\ref{thm2.2} (ii).

\section{Biased random walks on free product of graphs}\label{sec3}

The study of random processes on free products of graphs goes back at least to Teh and Gan \cite{TG70}, Zno\v{i}ko \cite{Z75} and Lyndon and Schupp \cite{LS77}. The recursive structure of such graphs often makes it possible to do explicit computations, leading to close-form analytical formulas. For simple random walks on free products of graphs, the spectral radius (see, for example, Woess \cite{WW2000} p. 101-110) and the critical percolation probability (\v{S}pakulov\'{a} \cite{S09}) are known. When $\lambda\not=1$, the biased random walks are not transitive any more, making computations more delicate. In this section, we determine the spectral radius and the speed of the biased random walk on the free product of two complete graphs.

Let $r\in\mathbb{N}, \; r \ge 2$. Write $\mathcal{I}=\{1,\ldots,r\}$. Let $\{G_i=(V_i,\, E_i, \, o_i)\}_{i\in\mathcal{I}}$ be a family of connected finite rooted graphs with vertex sets $V_i$, edge sets $E_i$ and roots $o_i$. Call a copy of $G_i$ an $i$-cell. Assume that each $|V_i|\ge 2$ for all $i$, and that all $V_i$'s are disjoint. Put
$$
V_i^\times
:=
V_i\setminus\{o_i\},
\qquad \mathrm{and}\qquad
\langle x\rangle
:=
i,
\qquad \mathrm{if} \ x\in V_i^\times,\ i\in\mathcal{I}.
$$
Define
$$
V
:=
V_1*\cdots *V_r
=
\Big\{ x_1x_2\cdots x_n\, \Big| \, n\in\mathbb{N}, \, x_i\in \bigcup_{j\in\mathcal{I}} V_j^\times, \ \langle x_i\rangle \not=\langle x_{i+1}\rangle \Big\} \cup\{o\}.
$$

\noindent We can also view $V$ as the set of words over the alphabet $\bigcup_{j\in\mathcal{I}}V_j^\times$ without two consecutive letters from the same $V_j^\times$, with $o$ denoting the empty word in $V$. Let
$$
\langle x_1\cdots x_n\rangle
:=
\langle x_n\rangle,\ \forall x_1\cdots x_n\in V;
\qquad
\langle o\rangle := 0.
$$

\noindent For any pair of words $x=x_1\cdots x_m$ and $y=y_1\cdots y_n\in V$ with $\langle x_m\rangle\not=\langle y_1\rangle$, the concatenation $xy$ of $x$ and $y$ is an element of $V$. In particular, $xo=ox=x$. When $\langle x\rangle\not=i\in\mathcal{I}$, we set $xo_i=o_ix=x$.

Define the set $E$ of edges on $V$ as follows: If $x$, $y\in V_i$ with $i\in\mathcal{I}$ and $x\sim y$, then
$$
wx\sim wy
\ \mbox{for any}\
w\in V
\ \mbox{with}\
\langle w\rangle\not=i.
$$

\noindent Then $G=(V, \, E,\, o)$ is the free product of the graphs $G_1$, $\ldots$, $G_r$, denoted by
$$
G=G_1*G_2*\cdots*G_r.
$$

\noindent {By \cite[Theorem 10.10]{WW2000}, $G$ is nonamenable if $r\ge 3$ or if $\max_{i\in \mathcal{I}} |V_i| \ge 3$.}

Let
$$
\partial B_{G_i}(n)
:=
\{x\in V_i:\ |x| =n\},
\qquad
\psi_i(z)
:=
\sum_{n\ge 1}|\partial B_{G_i}(n)|z^n, \ z\ge 0.
$$

\noindent {F}rom \cite[Lemma 4.15]{CE-GL-MS2012}, we have
\begin{equation}
    \label{(3.1)}
    \mathrm{gr}(G)
    =
    \frac{1}{z_*},
    \ \mbox{where $z_*$ is the unique postive number satisfying}\
    \sum_{i=1}^r\frac{\psi_i(z_*)}{1+\psi_i(z_*)}=1.
\end{equation}

Let $m_1$ and $m_2$ be positive integers such that $m_1 m_2 \geq 2$, and $K_{m_i + 1}$ the complete graph on $m_i + 1$ vertices (for $i=1$ and $2$).  We observe that by \eqref{(3.1)}, $\lambda_c (G) = \sqrt{m_1 m_2}$ when $G = K_{m_1 + 1} * K_{m_2 + 1}$.

\begin{thm}\label{thm3.1}
  Let $G:= K_{m_1 + 1} * K_{m_2 + 1}$ and $\lambda \in (0,\, \lambda_c(G))$. Let $m=m_1+m_2$. For $\RW_{\lambda}$ on $G$, 
  the following hold:
  \begin{enumerate}[(i)]
  \item The speed exists and equals 
$$
\mathcal{S}(\lambda)
=
\frac{2(m_1m_2-\lambda^2)}{(2\lambda+m)(\lambda+m-1)}. 
$$
In particular, $\mathcal{S}(\lambda)>0$ is smooth and strictly decreasing on $(0, \, \lambda_c(G))$. 
\item $\RW_{\lambda}$ has the non-Liouville property, namely, $\RW_{\lambda}$ has a non-constant bounded harmonic function.
\item The spectral radius
  \begin{align*}
  \rho(\lambda)
  =
  \frac{m-2 + [ (m_1 - m_2)^2 + 4 \lambda ( \sqrt{m_1} + \sqrt{m_2} )^2 ]^{1/2}}{2(m+\lambda-1)}.
  \end{align*}
  In particular, 
  $\lambda \mapsto \rho(\lambda)\in (0,\, 1)$ is strictly increasing on $(0,\, \lambda_c(G))$.
Moreover, for some constant $c>0$,
\[p^{(n)}_\lambda (o,o)\sim c \, \rho(\lambda)^n n^{-3/2} \quad  \text{as } n \to \infty. \] 
  \end{enumerate}
\end{thm}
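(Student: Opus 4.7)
The plan is to exploit the tree-of-cells structure of $G = K_{m_1+1} * K_{m_2+1}$ and reduce everything to a pair of first-passage generating functions. Any non-root vertex $v$ with $\langle v \rangle = j$ has a unique parent neighbour, $m_j-1$ siblings in its $j$-cell, and $m_i$ children (roots of new $i$-cells), so $d_v = m$, $d_v^{-} = 1$, $d_v^0 = m_j-1$, $d_v^+ = m_i$, and the transitions of $\RW_\lambda$ depend only on $j$. Let $F_j(z)$ denote the generating function of the first-passage time from any non-root vertex of a $j$-cell to that cell's root; by homogeneity it depends only on $j$, and first-step analysis gives the coupled system
\[
F_j(z)\bigl[(m+\lambda-1)-(m_j-1)z-m_i z F_i(z)\bigr] = z\lambda, \qquad \{i,j\}=\{1,2\}.
\]

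For part (iii), set $G_j(z) := z F_j(z)$ and eliminate $G_2$; the resulting quadratic equation for $G_1(z)$ has discriminant factoring as
\[
\bigl[AB - z^2\lambda(\sqrt{m_1}+\sqrt{m_2})^2\bigr]\bigl[AB - z^2\lambda(\sqrt{m_1}-\sqrt{m_2})^2\bigr],
\]
where $A = (m+\lambda-1)-(m_1-1)z$ and $B = (m+\lambda-1)-(m_2-1)z$. The smallest positive zero $z_*$ is the root of the first factor; substituting $y := (m+\lambda-1)/z$ turns $AB = z^2\lambda(\sqrt{m_1}+\sqrt{m_2})^2$ into $y^2-(m-2)y+(m_1-1)(m_2-1)-\lambda(\sqrt{m_1}+\sqrt{m_2})^2=0$, whose larger root is exactly $\bigl[(m-2)+\sqrt{(m_1-m_2)^2+4\lambda(\sqrt{m_1}+\sqrt{m_2})^2}\bigr]/2$, giving the claimed $\rho(\lambda)=1/z_*=y/(m+\lambda-1)$. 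Since $U(o,o\,|\,z)=(z/m)\bigl[m_1F_1(z)+m_2F_2(z)\bigr]$ and $\mathbb{G}(o,o\,|\,z)=1/(1-U(o,o\,|\,z))$, the spectral radius of $\RW_\lambda$ equals $1/z_*$ provided $U(o,o\,|\,z_*)<1$. I expect the main obstacle to be this strict inequality: evaluating $G_j$ at $z_*$ via the vanishing discriminant and summing, a short symmetric computation reduces it to $\lambda<1+z_*(\sqrt{m_1 m_2}-1)$, which holds on $(0,\lambda_c)$ because both sides are monotone in $\lambda$ and meet exactly at $\lambda=\lambda_c=\sqrt{m_1 m_2}$ (where $z_*=1$). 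Combined with aperiodicity of $\RW_\lambda$ (from the triangles inside each $K_{m_i+1}$ with $m_i\ge2$), ensuring $z_*$ is the unique singularity of $\mathbb{G}$ on its circle of convergence, the Flajolet--Sedgewick transfer theorem for algebraic functions with a square-root branch point then yields $p^{(n)}_\lambda(o,o)\sim c\,\rho(\lambda)^n n^{-3/2}$; monotonicity of $\rho$ and $\rho(\lambda)\in(0,1)$ follow from the closed form together with $\rho(\lambda_c)=1$ (Theorem~\ref{thm2.1}).

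For part (i), once $\RW_\lambda$ has left $\{o\}\cup\partial B_G(1)$ for the last time (a.s.\ by transience, since $F_j(1)<1$ for $\lambda<\lambda_c$), the type process $(\langle X_n\rangle)$ is a genuine Markov chain on $\{1,2\}$ with $P_{jj}=(m_j-1)/(m+\lambda-1)$ and $P_{ji}=(\lambda+m_i)/(m+\lambda-1)$; its unique stationary distribution is $\pi_j=(m_j+\lambda)/(m+2\lambda)$, and the conditional drift of $|X_n|$ at a type-$j$ vertex is $(m_i-\lambda)/(m+\lambda-1)$. Combining the ergodic theorem for the type chain with a martingale LLN for the bounded increments $|X_{n+1}|-|X_n|$ gives
\[
\mathcal{S}(\lambda)=\sum_{j=1,2}\pi_j\cdot\frac{m_i-\lambda}{m+\lambda-1}=\frac{2(m_1m_2-\lambda^2)}{(2\lambda+m)(\lambda+m-1)},
\]
which is positive, smooth, and strictly decreasing on $(0,\lambda_c)$ by inspection. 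For part (ii), positive speed together with transience forces $X_n$ to converge almost surely to an end $\xi_\infty$ of the cell tree of $G$: each cell is visited only finitely often (transience), while positive drift prevents oscillation between disjoint sub-cones, so the sequence of traversed cells stabilizes to a geodesic ray. Because the distribution of $\xi_\infty$ depends genuinely on the starting vertex (a walk started at a specific neighbour of $o$ is biased toward the corresponding sub-cone), the function $h(x):=\mathbb{P}_x(\xi_\infty\in S)$ is a non-constant bounded $\RW_\lambda$-harmonic function for any suitable set $S$ of ends, giving the non-Liouville property.
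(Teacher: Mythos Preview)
Your proposal is correct, and while parts (i) and (ii) match the paper's approach closely, part (iii) takes a genuinely different route that is worth highlighting.

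\textbf{Part (iii).} The paper computes $\rho(\lambda)$ by first establishing (for general $r$, in Theorem~\ref{thm3.2}) that $U(z)$ satisfies an implicit equation $U=F(z,U)$ and that $R_{\mathbb G}=R_U$; then $z_0=\rho(\lambda)^{-1}$ is characterised as the smallest positive $z$ with $\partial F/\partial U(z,U(z))=1$, which yields a linear relation between $U(z_0)$ and $z_0$ that is substituted back into $U=F(z_0,U)$. Your route bypasses the implicit-function criterion entirely: you work directly with the coupled first-passage system for $F_1,F_2$, eliminate to a quadratic in $G_1=zF_1$, and read off the branch point from the discriminant. The factorisation
\[
\bigl[AB-z^2\lambda(\sqrt{m_1}+\sqrt{m_2})^2\bigr]\bigl[AB-z^2\lambda(\sqrt{m_1}-\sqrt{m_2})^2\bigr]
\]
is a nice observation that makes the answer drop out with no implicit-function machinery. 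Your verification that $U(o,o\,|\,z_*)<1$ reduces to $\lambda<1+z_*(\sqrt{m_1m_2}-1)$ is exactly right (this matches the relation $mU(z_0)=m+\lambda-1-(\sqrt{m_1m_2}-1)z_0$ the paper derives by its route). To avoid any appearance of circularity, note that the monotonicity of $z_*$ in $\lambda$ can be checked directly from the explicit root of $AB=z^2\lambda(\sqrt{m_1}+\sqrt{m_2})^2$ before identifying $z_*$ with $\rho^{-1}$; the paper does the equivalent computation via the substitution $x=(m-2)+[(m_1-m_2)^2+4\lambda(\sqrt{m_1}+\sqrt{m_2})^2]^{1/2}$. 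For the asymptotics, your Flajolet--Sedgewick transfer argument and the paper's Darboux method are essentially the same singularity analysis.

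\textbf{Part (i).} The core idea is identical: the drift depends only on the type, and one averages against the asymptotic proportion of time spent in each type. Your formulation via the two-state chain with stationary law $\pi_j=(m_j+\lambda)/(m+2\lambda)$ is cleaner than the paper's explicit stopping-time decomposition $\{\sigma_k^i,\tau_k^i\}$. One technical caution: the type process $(\langle X_n\rangle)$ is \emph{not} Markov on its own (the transition from type $j$ differs at level $1$ versus level $\ge2$), and the last-exit time from $\{o\}\cup\partial B_G(1)$ is not a stopping time, so ``genuine Markov chain after last exit'' needs care. The paper handles this by working with the true Markov chain $(|X_n|,\langle X_n\rangle)$ and showing $k_n^1/n,\,k_n^2/n$ converge; your argument goes through once you note that the set $\{n:|X_n|\le1\}$ is a.s.\ finite and that the type transitions are the stated ones whenever $|X_n|\ge2$.

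\textbf{Part (ii).} Your end-boundary argument is the same in spirit as the paper's, which fixes one child $y$ of $o$, sets $A$ to be the subtree of words beginning with $y$, and takes $h(z)=\mathbb P_z[(X_n)\text{ ends up in }A]$.
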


\medskip

\noindent The proof of  Theorem \ref{thm3.1} is presented in Section~\ref{pfthm3.1}.

\subsection{Spectral radius for free product of complete graphs}
\label{subs:free_product_complete_graphs}

Let $r \geq 2$ and $m_i \geq 1$, $1 \le i \le r$. Let $G$ be the free 
product of the complete graphs $K_{m_i+1}$ with $m_i+1$ vertices. 
Let $z_*$ denote the unique positive number satisfying
\begin{equation}
    \label{(3.4)}
    \sum_{i=1}^r \frac{m_iz_*}{1+m_iz_*} = 1.
\end{equation}

\noindent By (\ref{(3.1)}),
\begin{equation}
    \label{(3.5)}
    \lambda_c(G)=\mathrm{gr}(G)=\frac{1}{z_*}.
\end{equation}

\noindent Write $m:= \sum_{i=1}^r m_i$.  The transition probability of $\RW_\lambda$ from $v$ to an adjacent vertex $u$ is
$$
p(v,\, u)
=
\begin{cases}
      \frac1m &{\rm if}\ v=o,\\
      \frac{\lambda}{m+\lambda-1} &{\rm if}\ u\in \partial B_G(|v|-1)\ \mbox{and}\ v\neq o,\\
     \frac{1}{m+\lambda-1} &{\rm otherwise}.
\end{cases}
$$



\begin{thm}
 \label{thm3.2}

 For $\lambda\in [0, \, \lambda_c(G))$, we have $\rho(\lambda)<1$. Moreover,
\begin{equation}
    \label{(3.6)}
    \rho(0+)
    =
    \frac{\max_{1\le i\le r}(m_i-1)}{m-1}.
\end{equation}

\end{thm}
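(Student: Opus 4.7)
My approach is to analyze the algebraic system satisfied by the first-passage generating functions, exploiting the recursive self-similarity of the free product. For each $i\in\{1,\dots,r\}$ and $v\in V_i^\times$ with $|v|=1$, set
$$
F_i(z) \;:=\; \sum_{n\ge 1} z^n\, \mathbb{P}_v(\tau_o=n).
$$
A first-step analysis from $v$ (a direct jump to $o$; a jump to one of the $m_i-1$ siblings in $V_i^\times$, from which the first hit of $o$ again has generating function $F_i$ by symmetry; or a jump to a child $vu$ with $u\in V_j^\times$, $j\ne i$, from which the walker must first return to $v$ via a self-similar excursion with generating function $F_j$ and then reach $o$) yields
\begin{equation*}
F_i \;=\; \alpha z + (m_i-1)\beta z F_i + \beta z F_i \sum_{j\ne i} m_j F_j, \qquad i=1,\dots,r, \tag{$\ast$}
\end{equation*}
with $\alpha=\lambda/(m+\lambda-1)$ and $\beta=1/(m+\lambda-1)$. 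Combined with $U(o,o|z)=\sum_i (m_i/m)\,z F_i(z)$ and $\mathbb{G}(o,o|z)=(1-U(o,o|z))^{-1}$, this reduces the theorem to a singularity analysis of $(\ast)$.

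For the first claim $\rho(\lambda)<1$ on $[0,\lambda_c(G))$: at $\lambda=0$ every transition from $v\ne o$ toward $o$ vanishes, so $p^{(n)}(o,o)=0$ for all $n\ge 1$ and $\rho(0)=0$. For $\lambda\in(0,\lambda_c(G))$ transience gives $\mathbb{G}(o,o|1)<\infty$, hence $R_{\mathbb{G}}(\lambda)\ge 1$, and I will upgrade this via the implicit function theorem applied to $(\ast)$. Using the identity $F_i\bigl[1-(m_i-1)\beta z-\beta z\sum_{j\ne i}m_jF_j\bigr]=\alpha z$ from $(\ast)$ together with the matrix determinant lemma, the Jacobian of $(\ast)$ in $(F_1,\dots,F_r)$ is singular precisely when
\begin{equation*}
\Phi(z,\lambda) \;:=\; \sum_{i=1}^{r} \frac{m_i F_i(z)^2}{\lambda+m_i F_i(z)^2} \;=\; 1. \tag{$\star$}
\end{equation*}
At $\lambda=\lambda_c(G)$ the walk is recurrent, so $U(o,o|1)=1$ forces $F_i(1)=1$ for every $i$, and $(\star)$ at $(z,\lambda)=(1,\lambda_c(G))$ reduces to $\sum_i m_i/(\lambda_c+m_i)=1$, which is exactly the defining equation \eqref{(3.4)} of $\lambda_c(G)=1/z_*$. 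I will then establish $\Phi(1,\lambda)<1$ for $\lambda\in[0,\lambda_c(G))$ by a monotonicity argument: differentiating $(\ast)|_{z=1}$ in $\lambda$ produces a linear system for $\partial_\lambda F_i(1;\lambda)$ whose coefficient matrix is an M-matrix with strictly positive right-hand side (since $F_i(1;\lambda)<1$ whenever $\lambda<\lambda_c(G)$), so $\partial_\lambda F_i>0$, and a direct computation of $\partial_\lambda\Phi(1,\lambda)$ combined with the critical identification yields $\partial_\lambda\Phi(1,\lambda)>0$. Hence the branch point $z^*(\lambda)$ of $F_i$ satisfies $z^*(\lambda)>1$, and therefore $R_{\mathbb{G}}(\lambda)>1$.

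For $\rho(0+)=\max_i(m_i-1)/(m-1)$: I will expand $F_i(z)=\lambda g_i(z)+O(\lambda^2)$ as $\lambda\to 0^+$ (using $\alpha=\lambda\beta$ and $\beta\to (m-1)^{-1}$); collecting the leading terms in $(\ast)$ gives
$$
g_i(z) \;=\; \frac{z}{\,m-1-(m_i-1)z\,},
$$
whose smallest positive singularity is $z_i^*=(m-1)/(m_i-1)$ (with $z_i^*=+\infty$ when $m_i=1$). Because $U(o,o|z)=O(\lambda)$ uniformly on compact subsets of $\{|z|<\min_i z_i^*\}$ where all $F_j$'s are analytic, for $\lambda$ small enough $U(o,o|z)<1$ on $[0,\min_i z_i^*-\varepsilon]$ for every $\varepsilon>0$; so $R_{\mathbb{G}}(\lambda)$ is controlled by the smallest singularity of $F_i$ and converges to $\min_i z_i^*=(m-1)/\max_i(m_i-1)$ as $\lambda\to 0^+$. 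Taking reciprocals yields $\rho(0+)=\max_i(m_i-1)/(m-1)$.

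The main obstacle is the strict monotonicity of $\Phi(1,\cdot)$ used in Step 1. My plan is to exploit the M-matrix/Perron--Frobenius structure of the linearization of $(\ast)$ at $z=1$, together with the critical identification $\sum_i m_i/(\lambda_c+m_i)=1$, to push through the monotonicity cleanly; the uniform analytic-perturbation statements needed in Step 2 are then standard for algebraic generating functions defined by a polynomial system.
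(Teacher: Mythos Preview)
Your setup is correct and leads to a criterion equivalent to the paper's. The system $(\ast)$ is right, and your Jacobian computation gives the singularity condition $\Phi(z,\lambda)=\sum_i m_iF_i^2/(\lambda+m_iF_i^2)=1$. In fact this is \emph{identical} to the paper's condition $\partial_U F(1,U(1))<1$: using $a_i:=m-m_i+\lambda-S$ (with $S=\sum_j m_jF_j=mU$) and the quadratic $m_iF_i^2+a_iF_i=\lambda$, one checks that $2m_iF_i+a_i=(\lambda+m_iF_i^2)/F_i$, whence the paper's expression $\sum_i m_iF_i/(2m_iF_i+a_i)$ collapses to your $\Phi$. So the two routes meet at the same inequality $\Phi(1,\lambda)<1$.

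The genuine gap is in your proof of that inequality. You propose to show $\lambda\mapsto\Phi(1,\lambda)$ is strictly increasing on $(0,\lambda_c]$, relying on $\partial_\lambda F_i(1;\lambda)>0$ via an M-matrix argument. Even granting $\partial_\lambda F_i>0$, a direct differentiation gives
\[
\partial_\lambda\Phi(1,\lambda)=\sum_i\frac{2\lambda m_iF_i\,\partial_\lambda F_i - m_iF_i^2}{(\lambda+m_iF_i^2)^2},
\]
which has competing signs: the explicit $\lambda$ in the denominator contributes a negative term $-m_iF_i^2$. Your phrase ``a direct computation combined with the critical identification'' does not resolve this; you would need a quantitative lower bound on $\partial_\lambda F_i$ to dominate that negative piece, and the M-matrix/Perron--Frobenius sketch does not supply one. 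The paper sidesteps this entirely: it observes that $U\mapsto F(1,U)$ is strictly convex with $F(1,0)>0$, $F(1,1)=1$, and slope $\sum_i m_i/(m_i+\lambda)>1$ at $U=1$ (the latter being exactly the defining relation of $\lambda_c$ rewritten), so the smaller fixed point $U(1)<1$ necessarily has $\partial_U F(1,U(1))<1$. This convexity argument is short and avoids any $\lambda$-differentiation; you could graft it directly onto your setup since it proves precisely $\Phi(1,\lambda)<1$.

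For $\rho(0+)$, your perturbation approach is sound in spirit but differs from the paper's elementary path-counting. Your lower bound $\liminf_{\lambda\downarrow 0}R_{\mathbb{G}}(\lambda)\ge(m-1)/\max_i(m_i-1)$ via uniform analyticity on compacts is fine. For the matching upper bound on $R_{\mathbb{G}}$ you need the singularity of $F_i$ to actually approach $(m-1)/(m_{i_*}-1)$; rather than invoking ``standard'' perturbation of algebraic singularities, it is cleaner to drop the nonnegative cross-term in $(\ast)$ to get $F_{i_*}(z)\ge \alpha z/\bigl(1-(m_{i_*}-1)\beta z\bigr)$, which forces $R_{F_{i_*}}\le(m+\lambda-1)/(m_{i_*}-1)$ and hence $\limsup_{\lambda\downarrow 0}R_{\mathbb{G}}(\lambda)\le(m-1)/(m_{i_*}-1)$. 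The paper obtains both directions by bounding $p^{(n)}_\lambda(o,o)$ directly from paths confined to the $i_*$-cell (lower bound on $\rho$) and a uniform estimate on the increments of $|X_n|$ (upper bound on $\rho$).
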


\noindent {\it Proof.} {\bf Step 1.} Recall $U(o, \, o\, |\, z)$ and $R_U$ from \eqref{U}. For $z\in (-R_U, \, R_U)$,
\begin{equation}
    U(o, \, o\, |\, z)
    =
    \sum_{i=1}^r \frac{- (\phi_i(z)-mU(o, \, o\, |\, z))}{2m}
    +
    \sum_{i=1}^r\frac{[(\phi_i(z)- mU(o, \, o\, |\, z))^2+4\lambda m_iz^2]^{1/2}}{2m},
    \label{(3.7)}
\end{equation}

\noindent where $\phi_i(z) := m-1+\lambda-(m_i-1)z$.

To this end, let $\tau_o^+ := \inf\{ n\ge 1 \, | \, X_n =o\}$ as before, and for $i=1$, $2$, $\ldots$, $r$, let $f_i^{(n)}(o,\, o) := \mathbb{P}_o (\tau_o^+ =n, \ \langle X_1\rangle = i)$. Define
$$
U_i(o, \, o\, |\, z)
:=
\sum_{n=1}^\infty f_i^{(n)}(x,\, y)\, z^n,
\qquad
z\ge 0.
$$

\noindent Then
$$
U(o, \, o\, |\, z)
=
\sum_{i=1}^r U_i(o, \, o\, |\, z) ,
\qquad
z\ge 0.
$$

\noindent Note the tree-like structure of $G$. When the event $\{\tau_o^+ =n, \ \langle X_1\rangle = i\}$ occurs, $\RW_\lambda$ must visit an edge in $i$-cell
attached at $o$ at step 1 and return to $o$ the first time by an edge in the same $i$-cell at step $n$. Each vertex of the $i$-cell is attached to a certain $j$-cell (with $j\not= i$). {F}rom the spherical symmetry of each $K_{m_i+1}$, we obtain
$$
U_i(o, \, o\, |\, z)
=
\frac{m_i}{m}z \frac{\lambda}{m+\lambda-1}z
\sum_{n=0}^\infty
\left(M_1(z) + M_2(z) + \cdots + \widetilde{M}_i(z) + M_{i+1}(z) +\cdots + M_r(z)\right)^n ,
$$

\noindent where, for $j\not=i$,
$$
M_j(z)
:=
\sum_{n=1}^\infty \mathbb{P}_x [\tau_x^+=n,\ \langle X_1\rangle=j] \, z^n,
\qquad
x\in V(G),\ \langle x\rangle=i,\ |x| =1,
$$

\noindent which does not depend on $\langle x\rangle=i$, and
$$
\widetilde{M}_i(z)
:=
\mathbb{P}_x [ \langle X_1\rangle =i] \, z
=
\frac{m_i-1}{m+\lambda-1}z,
\qquad
x\in V(G), \ \langle x\rangle=i,\ |x| =1.
$$

\noindent By the similarity structure of $G$,
$$
M_j(z)
=
\Big( \frac{m_j}{m+\lambda-1} \Big/ \frac{m_j}{m}\Big) \, U_j(o, \, o\, |\, z)
=
\frac{m}{m+\lambda-1}\, U_j(o, \, o\, |\, z).
$$

\noindent So when $|z|<R_U$ (where $R_U$ denotes as before the convergence radius of $U$),
\begin{align*}
    U_i(o, \, o\, |\, z)
 &= \frac{\lambda m_i}{m(m+\lambda-1)}\, z^2 \,
    \frac{1}{1-[\frac{m}{m+\lambda-1} \sum_{j=1}^r U_j(o, \, o\, |\, z) - \frac{m}{m+\lambda-1} U_i(o, \, o\, |\, z) + \widetilde{M}_i(z)]}
    \\
 &= \frac{\lambda m_i}{m(m+\lambda-1)}\, z^2 \,
    \frac{1}{1-[\frac{m}{m+\lambda-1} U(o, \, o\, |\, z) - \frac{m}{m+\lambda-1} U_i(o, \, o\, |\, z) + \widetilde{M}_i(z)]} .
\end{align*}

\noindent Since $\widetilde{M}_i(z) = \frac{m}{m+\lambda-1}\, U_j(o, \, o\, |\, z)$, this yields, with the notation $\phi_i(z) := m-1+\lambda-(m_i-1)z$,
$$
U_i(o, \, o\, |\, z)
=
\frac{- (\phi_i(z) - mU(o, \, o\, |\, z))}{2m}
+
\frac{[(\phi_i(z) - mU(o, \, o\, |\, z))^2 + 4\lambda m_iz^2]^{1/2}}{2m},
$$

\noindent which implies (\ref{(3.7)}).

{\bf Step 2.} For any $0<\lambda<\lambda_c(G)$, $\mathbb{G}(o,\, o\, |\, R_{\mathbb{G}})<\infty$, $U(o,\, o\, | \, R_{\mathbb{G}})<1$, and $R_{\mathbb{G}}=R_U$.

Note that 
$R_{\mathbb{G}}\le R_U$, and that for $|z|< R_\mathbb{G}$, $|U(o, \, o\, |\, z)|<1$, $\mathbb{G}(o, \, o\, |\, z) = \frac{1}{1-U(o, \, o\, |\, z)}$. So $U(o,\, o\, |\, R_\mathbb{G}) = \lim_{z\uparrow R_\mathbb{G}} U(o, \, o\, |\, z) \le 1$.

Recall Pringsheim's Theorem: For $f(z)=\sum_{n=0}^\infty a_n z^n$ with $a_n\ge 0$, its convergence radius is the smallest positive singularity point of $f(z)$. As such, the smallest positive singularity point $R_{\mathbb{G}}$ of $\mathbb{G}(o, \, o\, |\, z)$ is either the smallest positive number $z_1$ with $U(o, \, o\, |\, z_1)=1$ if exists, or the convergence radius $R_U$ for $U(o, \, o\, |\, z)$. Since $U(o, \, o\, |\, z)$ is strictly increasing in $z\ge  0$, and $z_1$ is the unique positive number satisfying $U(o, \, o\, |\, z)=1$ if exists, it remains to prove that $U(o,\, o\, | \, R_{\mathbb{G}}) <1$ (which implies $\mathbb{G}(o,\, o\, | \, R_{\mathbb{G}}) < \infty$ and $R_{\mathbb{G}}=R_U$).

Assume this were note true; so $U(o,\, o\, | \, R_{\mathbb{G}}) = 1$. {\it{We exclude the trivial case where $m_i=1$ for $1\le i\le r$}} (in which case the result holds trivially; see the proof of Lemma \ref{lem2.6}). Note that $R_\mathbb{G}\ge 1$. If $R_\mathbb{G}=1$, then $U(o,\, o\, | \, R_{\mathbb{G}}) = U(o,\, o\, |\, 1)<1$ due to transience. So we assume $R_\mathbb{G}>1$.

By (\ref{(3.7)}),
\begin{align}
    1
 &= \sum_{i=1}^r\frac{-((\lambda-1) - (m_i-1)R_\mathbb{G}) +[ ( (\lambda-1) - (m_i-1) R_\mathbb{G} )^2 + 4\lambda m_iR_\mathbb{G}^2]^{1/2}}{2m}
    \nonumber
    \\
&=\sum_{i=1}^r\frac{(1-\lambda)+ (m_i-1) R_\mathbb{G} +[((1-\lambda) + (m_i-1) R_\mathbb{G})^2 + 4\lambda m_iR_\mathbb{G}^2]^{1/2}}{2m}.
    \label{pf:thm3.14}
\end{align}

\noindent We deduce a contradiction by distinguishing two possible cases.

{\bf Case 1.} $0< \lambda \le 1$. For any $1\le i\le r$,
$$
(1-\lambda)+(m_i-1)R_\mathbb{G}
\ge
(1-\lambda)+(m_i-1) \ge 0,
$$

\noindent  and the inequality is strict for at least one $i$. Thus by \eqref{pf:thm3.14},
\begin{align*}
    1
 &> \sum_{i=1}^r \frac{(1-\lambda) + (m_i-1) + [((1-\lambda) + (m_i-1))^2 + 4\lambda m_i]^{1/2}}{2m}
    \\
 &= \sum_{i=1}^r\frac{(m_i-\lambda)+(m_i+\lambda)}{2m}
    =
    1,
\end{align*}

\noindent which leads to a contradiction. Consequently,  in this case $U(o, \, o\, |\, R_\mathbb{G})<1$.

{\bf Case 2.} $1<\lambda<\lambda_c(G)$. Write
$$
[\lambda-1-(m_i-1)R_\mathbb{G}]^2 + 4\lambda m_i R_\mathbb{G}^2
=
[\lambda-1+(m_i+1)R_\mathbb{G}]^2 + 4\lambda m_i R_\mathbb{G}^2 - 4m_iR_\mathbb{G}^2 - 4(\lambda-1)m_iR_\mathbb{G}.
$$

\noindent Since $\lambda>1$ and $R_\mathbb{G}>1$,
$$
4\lambda m_iR_\mathbb{G}^2-4m_iR_\mathbb{G}^2-4(\lambda-1)m_iR_\mathbb{G}
=
4R_\mathbb{G}(\lambda-1) m_i(R_\mathbb{G}-1) >0.
$$

\noindent So $[\lambda-1-(m_i-1)R_\mathbb{G}]^2 + 4\lambda m_i R_\mathbb{G}^2 > [\lambda-1+(m_i+1)R_\mathbb{G}]^2$. By \eqref{pf:thm3.14},
$$
1
>
\sum_{i=1}^r \frac{(1-\lambda)+ (m_i-1) R_\mathbb{G} +[\lambda-1+(m_i+1)R_\mathbb{G}]}{2m}
=
\sum_{i=1}^r \frac{m_i \, R_\mathbb{G}}{m}
=
R_\mathbb{G},
$$

\noindent contradicting the assumption $R_\mathbb{G}>1$. Hence $U(o,\, o\, |\, R_\mathbb{G})<1$ in this case as well.

{\bf Step 3.} Let $\phi_i(z) := m-1+\lambda-(m_i-1)z$ for $1\le i\le r$, and
\begin{equation}
  \label{e:FzU}
F(z,\, U)
:=
\frac{1}{2m}\sum_{i=1}^r \left\{-(\phi_i(z)-mU)+[(\phi_i(z)-mU)^2+4\lambda m_iz^2]^{1/2}\right\}.
\end{equation}

\noindent Then $U(z):= U(o,\, o\, | \, z)$ solves the equation $U=F(z,\, U)$, $|z|<R_U$, and $\rho(\lambda)^{-1}$ is the smallest positive number $z$ such that $\frac{\partial F}{\partial U} (z,\, U(z))=1$. Therefore, to obtain $\rho(\lambda)<1,$ it suffices to prove that
$$
\left| \frac{\partial F}{\partial U} (1,\, U(1))\right|
<
1.
$$

To prove this, we observe that
\begin{align*}
    F(1,\, 0)
 &= \frac{1}{2m} \sum_{i=1}^r \left\{-(m-m_i+\lambda) + [(m-m_i+\lambda)^2+4\lambda m_i]^{1/2} \right\}>0,
    \\
    F(1,\, 1)
 &= \frac{1}{2m} \sum_{i=1}^r \left\{m_i-\lambda + [(m_i-\lambda)^2+4\lambda m_i]^{1/2} \right\}
    =
    \frac{1}{2m} \sum_{i=1}^r \{ m_i-\lambda+m_i+\lambda\}
    =1.
\end{align*}

\noindent Moreover,
\begin{align*}
    \frac{\partial F}{\partial U} (1,\, U)
 &= \frac{r}{2} - \frac{1}{2} \sum_{i=1}^r\frac{m-m_i+\lambda-mU}{[(m-m_i+\lambda-mU)^2+4\lambda m_i]^{1/2}}
    > 0,
    \\
    \frac{\partial^2F}{\partial U^2} (1,\, U)
 &= \frac{m}{2} \sum_{i=1}^r \frac{4\lambda m_i}{\{(m-m_i+\lambda-mU)^2+4\lambda m_i\}^{3/2}}
    > 0 .
\end{align*}

\noindent Hence $F(1,\, U)$ is strictly increasing and convex in $U\in\mathbb{R}$. By (\ref{(3.5)}), for any $\lambda \in (0, \, \lambda_c(G))$,
$$
\frac{\partial F}{\partial U} (1,\, 1)
=
\frac{r}{2} - \frac{1}{2} \sum_{i=1}^r \frac{-m_i+\lambda}{[(m_i-\lambda)^2+4\lambda m_i]^{1/2}}
=
\sum_{i=1}^r\frac{m_i}{m_i+\lambda}
>1.
$$

\noindent As a consequence, $U(1)$ is the smallest positive solution to $U=F(1,U)$ and $0<\frac{\partial F}{\partial U}(1,\, U(1))<1$. 
Therefore we have proved that $\rho(\lambda) < 1$. 

{\bf Step 4.} Now we  prove (\ref{(3.6)}).

If $m_i=1$ for all $i$, then $G$ is the $r$-regular tree, and by Theorem \ref{thm2.1}(ii), $\rho(0+)=\rho(0)=0$, so (\ref{(3.6)}) holds.

\noindent Assume that 
$$
\max_{1\le i\le r}m_i
=
m_{i_*}>1
\ \mbox{for some}\
i_* \in \{1, \ldots, r\}.
$$

\noindent For any $1\le i\le r$ and $n \geq 3$,
let
$$
A_i(n)
:=
\{X_0=o, \ \langle X_1\rangle=\langle X_2\rangle=\cdots =\langle X_{n-1}\rangle=i, \ X_n=o \}.
$$
Then
$$
p_\lambda^{(n)}(o,\, o)
\ge
\mathbb{P}_o[A_{i_*}(n)]
=
\frac{m_{i_*}}{m} \left(\frac{m_{i_*}-1}{m-1+\lambda}\right)^{n-2} \frac{\lambda}{m-1+\lambda} ,
$$

\noindent which implies that
$$
\rho(\lambda)
\ge
\lim_{n\to\infty} \{\mathbb{P}_o[A_{i_*}(n)]\}^{1/n}
=
\frac{m_{i_*}-1}{m-1+\lambda}.
$$

\noindent Consequently,
$$
\liminf_{\lambda\downarrow 0} \rho(\lambda)
\ge
\frac{m_{i_*}-1}{m-1}.
$$

It remains to prove that $\limsup_{\lambda\downarrow 0} \rho(\lambda)
\le \frac{m_{i_*}-1}{m-1}$. Let us make a few simple observations concerning the transition probability of $\{ |X_n|\}_{n\ge 0}$. Let $\ell\in\mathbb{N}$ and let $k \in\mathbb{N}$.

For any $j\in \{ 1, \ldots, r\}$ such that $\mathbb{P}_o [ \langle X_k\rangle=j, \ |X_k|=\ell]>0$, we have
$$
\mathbb{P}_o[ |X_{k+1}|=\ell\ \Big| \ \langle X_k\rangle=j, \ |X_k| =\ell, \ |X_{k-1}|, \ldots, |X_0|]
=
\frac{m_j-1}{m-1+\lambda}
\le
\frac{m_{i_*}-1}{m-1+\lambda},
$$

\noindent so that
$$
\mathbb{P}_o[ |X_{k+1}|=\ell\ \Big| \ |X_k| =\ell, \ |X_{k-1}|, \ldots, |X_0|]
\le
\frac{m_{i_*}-1}{m-1+\lambda} .
$$

\noindent On the other hand,
$$
\mathbb{P}_o[ |X_{k+1}|=\ell-1\ \Big| \ |X_k| =\ell, \ |X_{k-1}|, \ldots, |X_0|]
=
\frac{\lambda}{m-1+\lambda},
$$

\noindent and trivially,
$$
\mathbb{P}_o[ |X_{k+1}|=\ell+1\ \Big| \ |X_k| =\ell, \ |X_{k-1}|, \ldots, |X_0|]
\le
1.
$$

\noindent For any $ n \ge 3$, let $\mathcal{S}_n$ denote the set of all vectors $\vec{s} := \{s_k\}_{1\le k\le n}$ such that
$$
s_1=1,\ s_n=-1,\
s_k \in \{-1,\, 0,\, +1\},\ \sum_{j=1}^k s_j\ge 0, 1\le k\le n-1, \ \sum_{j=1}^n s_j=0.
$$

\noindent For $\vec{s}\in\mathcal{S}_n$, let
$$
a_+(\vec{s})
:=
\# \{k\le n:\ s_k=+1\},
\
a_- (\vec{s})
:=
\# \{k\le n:\ s_k=-1\},
\
a_0(\vec{s})
:=
\# \{k\le n:\ s_k=0\}.
$$

\noindent Clearly, $a_+ (\vec{s}) = a_- (\vec{s})$, $2a_- (\vec{s})+ a_0(\vec{s})=n$. Moreover, if $|X_n| =0$, then $\{ |X_{k}| - |X_{k-1}| \}_{1 \le k\le n}\in\mathcal{S}_n$.

By our discussions on transition probabilities of $\{ |X_n|\}_{n\ge 0}$, it is seen that for $3\le n$ and $\vec{s}\in\mathcal{S}_n$,
$$
\mathbb{P}_o [ \, |X_n| =0\, \Big| \, \{|X_k| - |X_{k-1}|\}_{1\le k\le n}=\vec{s}\in\mathcal{S}_n \, ]
\le
1^{a_+(\vec{s})}
\left( \frac{\lambda}{m-1+\lambda} \right)^{a_-(\vec{s})} \left( \frac{m_{i_*}-1}{m-1+\lambda} \right)^{a_0(\vec{s})} .
$$

\noindent For sufficiently small $\lambda>0$, we have $\frac{\lambda}{m-1+\lambda} \le (\frac{m_{i_*}-1}{m-1+\lambda})^2$, so that
$$
\left( \frac{\lambda}{m-1+\lambda} \right)^{a_-(\vec{s})} \left( \frac{m_{i_*}-1}{m-1+\lambda} \right)^{a_0(\vec{s})}
\le
\left( \frac{m_{i_*}-1}{m-1+\lambda} \right)^{2a_-(\vec{s})} \left( \frac{m_{i_*}-1}{m-1+\lambda} \right)^{a_0(\vec{s})}
=
\left( \frac{m_{i_*}-1}{m-1+\lambda} \right)^n.
$$

\noindent Consequently,
$$
\mathbb{P}_o [ \, |X_n| =0\, ]
\le
\left( \frac{m_{i_*}-1}{m-1+\lambda} \right)^n.
$$

\noindent Hence,
$$
\limsup_{\lambda\downarrow 0} \rho(\lambda)
\le
\limsup_{\lambda\downarrow 0}\frac{m_{i_*}-1}{m-1+\lambda}
=
\frac{m_{i_*}-1}{m-1}
=
\frac{\max_{1\le i\le r} m_i-1}{m-1},
$$

\noindent completing the proof of \eqref{(3.6)}.
\qed

\subsection{Proof of Theorem \ref{thm3.1}}
\label{pfthm3.1}

Recall that $G$ is the free product of two complete graphs $K_{m_1 + 1}$ and $K_{m_2+1}$ and that $(X_n)_{n=0}^{\infty}$ is the $\lambda$-biased random walk on $G$. Recall that 
$\lambda_c(G) = \sqrt{m_1 m_2}$. 

Define
$$
f(x)=
\begin{cases}
  1 &{\rm if}\ x=o,
      \\
      \frac{m_2-\lambda}{m-1+\lambda} &{\rm if}\ x\neq o,\ \langle x\rangle =1,
      \\
       \frac{m_1-\lambda}{m-1+\lambda} &{\rm if}\ x\neq o,\ \langle x\rangle =2.
\end{cases}
$$

\noindent Then $\{|X_n|-|X_{n-1}|-f(X_{n-1})\}_{n=1}^\infty$ is a martingale-difference sequence. It follows from the strong law of large numbers for uncorrelated random variables (\cite[Theorem 13.1]{LR-PY2016}) that
$$
\lim_{n\to\infty} \frac{1}{n}\left(|X_n|-\sum_{k=0}^{n-1} f(X_k) \right)
=
0
\qquad
\hbox{\rm a.s.} 
$$

\noindent Note that $\sum_{k=0}^{n-1} f(X_k) = \sum_{k=0}^{n-1} f(o) \, I_{\{X_k=o\}}+\sum_{i=1}^2\sum_{k=0}^{n-1} f(i) \, I_{\{\langle X_k\rangle=i\}}$. Since the walk is transient $\frac{1}{n}\sum_{k=0}^{n-1}I_{\{X_k=o\}}\to 0$ a.s.\ for $0\le \lambda < \lambda_c(G)$. 
Consequently, we have 
\begin{equation}
    \label{(3.2)}
    \lim_{n\to\infty} \frac{1}{n}
    \left( |X_n|
           - \frac{m_2-\lambda}{m-1+\lambda} \sum_{k=0}^{n-1}I_{\{\langle X_k\rangle=1\}}
           - \frac{m_1-\lambda}{m-1+\lambda} \sum_{k=0}^{n-1}I_{\{\langle X_k\rangle=2\}}
    \right)
    =0 \quad \text{a.s.}
  \end{equation}
  



For any $\lambda\in [0,\, \infty)$, let
$$
F(\lambda)
:=
\frac{m_2-\lambda}{\lambda+m-1}\, \frac{m_1+\lambda}{2\lambda+m}
+
\frac{m_1-\lambda}{\lambda+m-1}\, \frac{m_2+\lambda}{2\lambda+m}
=
\frac{2m_1m_2-2\lambda^2}{(\lambda+m-1)(2\lambda+m)}.
$$
Note that $\lambda\mapsto F(\lambda)$ is strictly decreasing on $[0, \, \infty)$.

\begin{lem}
 \label{lem3.5}

 For any $0\le\lambda < \lambda_c(G)$, the speed $\mathcal{S}(\lambda) :=\lim_{n\to\infty}\frac{|X_n| }{n}$ exists almost surely, is deterministic and equals $F(\lambda)$. In particular,
$$
\mathcal{S}(\lambda)>0
\ \mbox{is smooth and strictly decreasing in $\lambda\in [0, \, \lambda_c(G))$, and}\
\lim_{\lambda\uparrow\lambda_c(G)}\mathcal{S}(\lambda)
=
0.
$$

\end{lem}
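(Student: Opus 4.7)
In view of \eqref{(3.2)}, the task reduces to computing the almost sure limits of $N_i(n)/n$ for $i=1,2$, where $N_i(n):=\sum_{k=0}^{n-1}I_{\{\langle X_k\rangle=i\}}$. The plan is to obtain two linear relations between these limits: one from transience and one from a martingale tracking one-step type transitions; the resulting $2\times 2$ system is nondegenerate and pins down the limits, which when substituted into \eqref{(3.2)} give $\mathcal{S}(\lambda)=F(\lambda)$.

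Since $\lambda<\lambda_c(G)$, $\RW_\lambda$ is transient, so any finite subset of $V(G)$ is visited only finitely often a.s. In particular, visits to $o$ and to the (finitely many) level-$1$ vertices contribute only $O(1)$ terms, giving $N_1(n)/n+N_2(n)/n\to 1$ a.s. For the second equation, set $\phi(x):=\mathbb{P}_x(\langle X_1\rangle=2)$ and consider the martingale
\[
M_n:=\sum_{k=0}^{n-1}\bigl(I_{\{\langle X_{k+1}\rangle=2\}}-\phi(X_k)\bigr),
\]
whose increments lie in $[-1,1]$. Azuma--Hoeffding combined with Borel--Cantelli gives $M_n/n\to 0$ a.s. Since $\sum_{k=0}^{n-1}I_{\{\langle X_{k+1}\rangle=2\}}=N_2(n)+O(1)$, this yields $N_2(n)/n=\tfrac{1}{n}\sum_{k=0}^{n-1}\phi(X_k)+o(1)$. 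Because consecutive letters of a word in $V(G)$ have opposite types, the parent of any $x$ with $|x|\ge 2$ has the opposite type to $\langle x\rangle$, and inspection of \eqref{(1.1)} gives $\phi(x)=(m_2+\lambda)/(m-1+\lambda)$ when $\langle x\rangle=1$, $|x|\ge 2$, and $\phi(x)=(m_2-1)/(m-1+\lambda)$ when $\langle x\rangle=2$. Using the $O(1)$ control on visits to $o$ and to level $1$, one therefore obtains
\[
\frac{N_2(n)}{n}=\frac{m_2+\lambda}{m-1+\lambda}\cdot\frac{N_1(n)}{n}+\frac{m_2-1}{m-1+\lambda}\cdot\frac{N_2(n)}{n}+o(1),
\]
which rearranges to $(m_1+\lambda)\,N_2(n)/n=(m_2+\lambda)\,N_1(n)/n+o(1)$.

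The two relations form a linear system in $\bigl(N_1(n)/n,\,N_2(n)/n\bigr)$ with determinant $m+2\lambda\ne 0$, and solve uniquely to $N_i(n)/n\to(m_i+\lambda)/(m+2\lambda)$ a.s. for $i=1,2$. Plugging these limits into \eqref{(3.2)} and simplifying the numerator $(m_2-\lambda)(m_1+\lambda)+(m_1-\lambda)(m_2+\lambda)=2(m_1m_2-\lambda^2)$ gives $\mathcal{S}(\lambda)=F(\lambda)$, which is deterministic. Since $\lambda_c(G)=\sqrt{m_1m_2}$, the numerator $m_1m_2-\lambda^2$ is positive on $[0,\lambda_c(G))$ and vanishes at $\lambda_c(G)$, while the denominator stays positive; smoothness and the strict decrease of $F$ on $[0,\infty)$ were already observed before the lemma. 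I expect the martingale step, and in particular the piecewise definition of $\phi$ (whose value at level $1$ for type-$1$ vertices differs from the generic formula), to be the main technical point; transience is used precisely to absorb this discrepancy into an $o(1)$ error.
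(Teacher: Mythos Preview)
Your proof is correct and reaches the same destination as the paper---both compute the almost-sure limits $N_i(n)/n\to(m_i+\lambda)/(m+2\lambda)$ and then feed them into \eqref{(3.2)}---but the routes differ. The paper proceeds via a renewal decomposition: it introduces entry/exit stopping times $\sigma_k^i,\tau_k^i$ for each type, shows the sojourn lengths $\tau_k^i-\sigma_k^i$ are i.i.d.\ geometric, applies the strong law to the excursion sums, and then argues that the counts $k_n^1,k_n^2$ of type-switches satisfy $(k_n^1-k_n^2)/n\to 0$ (because every switch into a type is eventually followed by a switch out, modulo passes through $o$). Your argument bypasses all of this by writing down the one-step transition martingale $M_n=\sum_{k<n}(I_{\{\langle X_{k+1}\rangle=2\}}-\phi(X_k))$, using Azuma--Hoeffding plus Borel--Cantelli to get $M_n/n\to 0$, and reading off a linear relation between $N_1(n)/n$ and $N_2(n)/n$; transience (applied to the finite set $\{o\}\cup\partial B_G(1)$) absorbs the boundary discrepancies in $\phi$ into an $o(1)$ error. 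Together with $N_1(n)/n+N_2(n)/n\to 1$ this gives a nondegenerate $2\times 2$ system, and you are done.

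Your approach is shorter and more elementary (no excursion structure, no i.i.d.\ sojourn times), and extends more readily to $r\ge 3$ factors: the same martingale trick with $\phi_i(x)=\mathbb{P}_x(\langle X_1\rangle=i)$ yields $r-1$ linear relations that, together with $\sum_i N_i(n)/n\to 1$, determine all densities. The paper's renewal argument, on the other hand, gives a bit more structural information (the explicit geometric law of sojourn times in each type), which could be useful if one wanted finer asymptotics.
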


\noindent {\it Proof.} {\bf Step 1.} Consider the process $(|X_n|, \langle X_n\rangle)_{n=0}^\infty$. For any type $1$ (resp.\ type $2$) vertex $x$, all its $m_2$ (resp.\ $m_1$) neighbours in $\partial B_G(|x|+1)$ are of type $2$ (resp.\ $1$), and its unique neighbour $x_{-}$ in $\partial B_G(|x| -1)$ is of type $2$ (resp.\ type $1$) if $|x| \ge 2$, and is $o$ if $|x| =1$. The vertex $o$ has exactly $m_1$ type $1$ neighbours and $m_2$ type $2$ neighbours in $\partial B_G(1)$.
The process $(|X_n|, \langle X_n\rangle)_{n=0}^\infty$ is a Markov chain on state space $(\mathbb{N}\times \{1, \, 2\})\cup\{(0,\, 0)\}$
with transition probability function $q(\, \cdot\, , \, \cdot\, )$ given by
\begin{eqnarray*}
&&q((0,0),\, (1,1))=\frac{m_1}{m},~q((0,0),\, (1,2))=\frac{m_2}{m},\\
&&q((1,1),\, (0,0))=\frac{\lambda}{m-1+\lambda},\ q((1,2),\, (0,0))=\frac{\lambda}{m-1+\lambda},\\
&&q((1,1),\, (1,1))=\frac{m_1-1}{m-1+\lambda},~q((1,1),\, (2,2))=\frac{m_2}{m-1+\lambda}, \\
&&q((1,2),\, (1,2))=\frac{m_2-1}{m-1+\lambda},~q((1,2),\, (2,1))=\frac{m_1}{m-1+\lambda};
\end{eqnarray*}
and for any $k\ge 2$,
\begin{eqnarray*}
&&q((k,1), \, (k,1))=\frac{m_1-1}{m-1+\lambda},\ q((k,2), \, (k,2))=\frac{m_2-1}{m-1+\lambda},\\
&&q((k,1), \, (k-1,2))=\frac{\lambda}{m-1+\lambda},\ q((k,1), \, (k+1,2))=\frac{m_2}{m-1+\lambda},\\
&&q((k,2), \, (k-1,1))=\frac{\lambda}{m-1+\lambda},\ q((k,2), \, (k+1,1))=\frac{m_1}{m-1+\lambda}.
\end{eqnarray*}

{\bf Step 2.}
Define, for $i\in\{1, \, 2\}$,
$$
\sigma_1^i := \inf \{n\ge 0: \ \langle X_n\rangle=i \},
\qquad
\tau_1^i := \inf \{n>\sigma_1^i: \ \langle X_n\rangle\neq i \},
$$

\noindent and recursively for any $k\in\mathbb{N}$,
$$
\sigma_{k+1}^i := \inf \{n>\tau_{k}^i:\ \langle X_n\rangle=i \},
\qquad
\tau_{k+1}^i := \inf \{n>\sigma_{k+1}^i:\ \langle X_n\rangle\neq i \}.
$$

\noindent Set
$$
p_1 := \frac{m_1-1}{m-1+\lambda},
\qquad
p_2 := \frac{m_2-1}{m-1+\lambda}.
$$

By Step 1 and the strong Markov property, all stopping times $\sigma_k^i$ and $\tau_k^i$ are finite, and $\{\tau_k^i-\sigma_k^i-1\}_{k\ge 1}$ is an i.i.d.\ sequence with $\mathbb{P}( \tau_k^i-\sigma_k^i-1 =j) = p_i^j (1-p_i)$ for $j\ge 0$. In particular, $\mathbb{E}(\tau_k^i-\sigma_k^i-1)=\frac{p_i}{1-p_i}$.

Notice that 
for any $n\ge 1+\sigma_1^i$, there exists a unique random integer $k_n^i$ such that $\sigma_{k_n^i}\le n-1 <\sigma_{k^i_{n+1}}$. Therefore, for any $n\ge 1+\sigma_1^1\vee \sigma_1^2$ and $i\in \{ 1, \, 2\}$,
$$
\frac{1}{n}\sum_{j=1}^{k_n^i-1} (\tau_j^i-\sigma_j^i)
\le
\frac{1}{n}\sum_{k=0}^{n-1}I_{ \{\langle X_k\rangle=i \}}
\le
\frac{1}{n}\sum_{j=1}^{k_n^i} (\tau_j^i-\sigma_j^i).
$$

Since $\{\tau_k^i-\sigma_k^i-1\}_{k\ge 1}$ is i.i.d.\ with $\mathbb{E}(\tau_1^i-\sigma_1^i)<\infty$, we have $\frac{1}{n} (\tau^i_{k_n^i} -\sigma^i_{k_n^i}) \to 0$ a.s. Consequently, for $i\in \{ 1, \, 2\}$,
$$
\frac{1}{n}\sum_{j=1}^{k_n^i} (\tau_j^i-\sigma_j^i)
-
\frac{1}{n}\sum_{k=0}^{n-1} I_{\{\langle X_k\rangle=i\}}
\to 0
\qquad
\hbox{\rm a.s.}
$$

{\bf Step 3.} Almost surely,
\[\lim_{n\to\infty}\frac{k_n^1}{n}=\lim_{n\to\infty}\frac{k_n^2}{n}=
     \frac{(m_1+\lambda)(m_2+\lambda)}{(m+2\lambda)(m-1+\lambda)}.\]

\noindent Indeed, $\frac{1}{n}\sum_{k=0}^{n-1}I_{\{ X_k=o \}} \to 0$ a.s., thus
$$
\lim_{n\to\infty} \left\{\frac{1}{n}\sum_{k=0}^{n-1}I_{\{\langle X_k\rangle= 1\}}+\frac{1}{n}\sum_{k=0}^{n-1}I_{\{\langle X_k\rangle=2 \}}\right\}=1
\qquad
\hbox{\rm a.s.}
$$

\noindent By Step 2, this implies that
\begin{equation}
    \lim_{n\to\infty} \frac{1}{n} \left( \sum_{j=1}^{k_n^1} (\tau_j^1-\sigma_j^1 ) + \sum_{j=1}^{k_n^2} (\tau_j^2-\sigma_j^2 )\right)
    =
    1
    \qquad
    \hbox{\rm a.s.}
    \label{pf:lem3.12}
\end{equation}
On the other hand, each $\{\tau_k^i-\sigma_k^i-1\}_{k\ge 1}$ is an i.i.d.\ sequence with $\mathbb{E}( \tau_k^i-\sigma_k^i-1 ) = \frac{p_i}{1-p_i}$, thus by the strong law of large numbers, for $i\in\{1, \, 2\}$,
$$
\lim_{n\to\infty} \frac{1}{k_n^i} \sum_{j=1}^{k_n^i} (\tau^i_j-\sigma^i_j )
=
1+\frac{p_i}{1-p_i}
=
\frac{1}{1-p_i}
\qquad
\hbox{\rm a.s.}
$$

\noindent In view of \eqref{pf:lem3.12}, we obtain:
\begin{equation}
    \lim_{n\to\infty} \left\{\frac{k_n^1}{n} \, \frac{1}{1-p_1}+\frac{k_n^2}{n}\, \frac{1}{1-p_2} \right\}
    =
    1
    \qquad
    \hbox{\rm a.s.}
    \label{(3.3)}
\end{equation}

Observe that $\langle X_{\tau_k^i} \rangle$ is either $j$ for $j\in\{1,\, 2\}\backslash\{i\}$ or $X_{\tau_k^i}=o$, and that when $X_{\tau_k^i}=o$, $X_{\tau_k^i+1}$ must be of type  $1$ or $2$. Since
$$
\frac1n \sum_{k=0}^{n-1} I_{\{X_k=o\}}
\to
0
\qquad
\hbox{\rm a.s.}
$$

\noindent it implies almost surely, that for $n\to \infty$, $k_n^1$ (the number of jumps of $\RW_{\lambda}$ from $o$ or type $2$ vertex to type $1$ vertex up to time $n-1$) differs by $o(n)$ from $k_n^2$ (the number of jumps of $\RW_{\lambda}$ from $o$ or type $1$ vertex to type $2$ vertex up to time $n-1$). In other words, 
$\frac{k_n^1-k_n^2}{n} \to 0$ a.s. In view of \eqref{(3.3)}, we get
$$
\lim_{n\to\infty}\frac{k_n^1}{n}
=
\lim_{n\to\infty}\frac{k_n^2}{n}
=
\frac{(m_1+\lambda)(m_2+\lambda)}{(m+2\lambda)(m-1+\lambda)}
\qquad
\hbox{\rm a.s.}
$$

{\bf Step 4.} By Steps 2 and 3, for $i\in\{1, \, 2\}$,
$$
\frac{1}{n}\sum_{k=0}^{n-1} I_{\{\langle X_k\rangle=i\}}
\to
\frac{m_i+\lambda}{m+2\lambda}
\qquad
\hbox{\rm a.s.}
$$

\noindent This and \eqref{(3.2)} complete the proof of this lemma.
\qed 


\bigskip

The next lemma concerns the non-Liouville property of $\RW_{\lambda}$ with $0\le\lambda<\lambda_c(G)$.

\begin{lem}
 \label{lem3.6}

 For any $0\le\lambda<\lambda_c(G)$, $\RW_{\lambda}$ has a non-constant bounded harmonic function.

\end{lem}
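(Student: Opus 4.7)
\textbf{Proof plan for Lemma~\ref{lem3.6}.} The strategy is to construct a non-constant bounded harmonic function from the asymptotic type of the first letter of $X_n$. By Lemma~\ref{lem3.5}, $\mathcal{S}(\lambda)>0$ for every $\lambda\in[0,\lambda_c(G))$, so $|X_n|\to\infty$ almost surely; in particular $\RW_\lambda$ is transient and visits $o$ only finitely many times. Set $T:=\sup\{n:X_n=o\}$, so $T<\infty$ a.s.

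The key structural observation is that, in the free product, an edge joins $wx$ and $wy$ only when $x,y$ lie in a common cell $V_i$ and $\langle w\rangle\neq i$; thus every edge preserves the prefix below the last letter. If $X_n=y_1\cdots y_k$ with $k\ge 2$, only $y_k$ can change in one step; while if $k=1$ and $X_n=y_1$ with $\langle y_1\rangle=i$, the permissible moves are to a sibling in $V_i$ (preserving type $i$), to an extension $y_1 z$ with $\langle z\rangle\neq i$ (first letter still $y_1$), or to $o$. Define $\tau(X_n):=\langle y_1(X_n)\rangle\in\{1,2\}$ whenever $X_n\neq o$; the case analysis shows $\tau(X_n)$ can only change when the walk passes through $o$. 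Since $X_n\neq o$ for all $n>T$, the limit $\tau_\infty:=\tau(X_{T+1})=\lim_{n\to\infty}\tau(X_n)\in\{1,2\}$ exists a.s.

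Next, define $h_1(x):=\mathbb{P}_x(\tau_\infty=1)$. As $\tau_\infty$ is measurable with respect to the tail $\sigma$-field of $(X_n)_{n\ge 0}$, the Markov property yields $h_1(x)=\mathbb{E}_x[h_1(X_1)]$, so $h_1$ is bounded and harmonic on $G$. To rule out that $h_1$ is constant, pick any length-$1$ vertex $y_1$ with $\langle y_1\rangle=1$ and apply the strong Markov property at $\tau_o^+$:
\[
h_1(y_1) \;=\; \mathbb{P}_{y_1}(\tau_o^+=\infty) \;+\; \mathbb{P}_{y_1}(\tau_o^+<\infty)\, h_1(o).
\]
Transience forces $\mathbb{P}_{y_1}(\tau_o^+=\infty)>0$, and conditioning on the first step from $o$ gives $\mathbb{P}_o(\tau_\infty=2)\ge \tfrac{m_2}{m}\mathbb{P}_{y_2}(\tau_o^+=\infty)>0$ for any type-$2$ neighbour $y_2$ of $o$, whence $h_1(o)<1$. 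Therefore $h_1(y_1)-h_1(o)=\mathbb{P}_{y_1}(\tau_o^+=\infty)\bigl(1-h_1(o)\bigr)>0$, so $h_1$ is non-constant.

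The main obstacle is the local graph-theoretic analysis needed to confirm that $\tau(X_n)$ is invariant under every edge not incident to $o$; once this inspection of the free-product edge set is carried out, the tail-measurability of $\tau_\infty$ together with non-degeneracy of return probabilities renders the rest routine, with transience (an immediate consequence of $\mathcal{S}(\lambda)>0$) being the only additional probabilistic input.
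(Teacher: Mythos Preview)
Your argument is correct and follows essentially the same route as the paper. Both proofs exploit that each level-$1$ vertex is a cut point of $G$, so by transience the walk eventually settles in one of the ``branches'' hanging off $o$; the resulting tail event yields a bounded harmonic function. The paper fixes a single vertex $y\in\partial B_G(1)$ with $\langle y\rangle=2$ and takes $f(z)=\mathbb{P}_z[\text{the walk ends up in }\{yw:w\}]$, showing $f(x)=(1-a)f(o)<f(o)$ for a type-$1$ vertex $x$ at level $1$; your $h_1$ simply aggregates over all type-$1$ branches and verifies non-constancy by the analogous strong-Markov decomposition $h_1(y_1)-h_1(o)=\mathbb{P}_{y_1}(\tau_o^+=\infty)(1-h_1(o))>0$. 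The only minor difference is cosmetic: you track the \emph{type} of the first letter rather than the first letter itself.
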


\pf Take $y\in \partial B_G(1)$ with $\langle y\rangle=2$. Let $A$ be the induced subgraph consisting of $y$ and 
all words (vertices) of forms $yw$. Let $(X_n)_{n=0}^{\infty}$ be $\RW_{\lambda}$ on $G$, and let $\mathbb{P}_z$ denote the law of $(X_n)_{n=0}^\infty$ starting at $z$. Notice that every vertex $z\in G$ is a cutpoint in the sense that $G\backslash \{z\}$ has two disjoint connected components. By the transience of 
$\RW_{\lambda}$, $\lim_{n\to \infty}I_{\{X_n\in A\}}$ exists $\mathbb{P}_z$-a.s.

For any vertex $z$ of $G$, let
$$
f(z)
:=
\mathbb{P}_z [(X_n)_{n=0}^{\infty}~ \mbox{ends up in $A$}].
$$

\noindent  Then $f$ is a bounded harmonic function. Let $x\in \partial B_G(1)$ with $\langle y\rangle =1$. Let
$$
a
:=
\mathbb{P}_x[ (X_n)_{n=0}^{\infty}~ \mbox{never hits $o$}].
$$

\noindent Since the walk is transient, we have $a\in (0,\, 1)$, and $f(x)=(1-a)f(o)$. Note that $(G,\, o)$ is quasi-spherically symmetric, so the transience of the walk implies $f(o)>0$. Hence $f$ is a non-constant harmonic function.
\qed

\bigskip

\noindent{\bf Proof of Theorem \ref{thm3.1}.} By
Lemmas \ref{lem3.5}-\ref{lem3.6}, we obtain Theorem \ref{thm3.1}(i)-(ii). It remains to prove Theorem \ref{thm3.1}(iii).


{\bf Step 1. Computation of $\rho(\lambda)$.}
Recall from Step 3 in the proof of Theorem~\ref{thm3.2} (Section \ref{subs:free_product_complete_graphs}) that $U(z) := U(o, \,
o\, | \, z)$ solves the equation $U = F(z,\, U)$, $|z| < R_{\mathbb{G}}$, and
$z_0 := \rho(\lambda)^{-1}$ is the smallest positive number $z$ such that
$\frac{\partial F}{\partial U}(z,\, U(z)) = 1$, where the function $F(z,\, U)$
is defined by \eqref{e:FzU} with $r = 2$:
$$
F(z,\, U)
:=
\frac{1}{2m} \sum_{i=1}^2 \{ -(\phi_i(z)-mU) + [(\phi_i(z)-mU)^2 + 4\lambda m_i z^2]^{1/2} \}.
$$
\noindent Since
\[
  \frac{\partial F}{\partial U}(z,\, U)
  =
  1 + \frac{1}{2} \sum_{i=1}^2 \frac{- \left( \phi_i(z) - m U \right)}{[ (
        \phi_i(z) - m U)^2 + 4 \lambda m_i z^2]^{1/2}},
\]
we have
\begin{equation}
  \label{e:phiz0}
  \frac{\phi_1(z_0) - m U(z_0)}{[(
        \phi_1(z_0) - m U(z_0))^2 + 4 \lambda m_1 z_0^2]^{1/2}}
  =
  \frac{m U(z_0) - \phi_2(z_0)}{[(
        \phi_2(z_0) - m U(z_0))^2 + 4 \lambda m_2 z_0^2]^{1/2}},
\end{equation}

\noindent which implies
$$
  \frac{\phi_1(z_0) - m U(z_0)}{\sqrt{4 \lambda m_1 z_0^2}}
  =
  \frac{m U(z_0) - \phi_2(z_0)}{\sqrt{4 \lambda m_2 z_0^2}}.
$$
Recall that $\phi_i(z) = m + \lambda - 1 - (m_i - 1) z$. This yields
\begin{equation}
  \label{e:Uz0}
  m U(z_0)
  =
  m + \lambda - 1 - ( \sqrt{m_1 m_2} - 1 ) z_0;
\end{equation}

\noindent hence $\phi_i (z_0) - m U(z_0) = (\sqrt{m_1 m_2} - m_i)z_0$. Consequently,
\begin{eqnarray}
    F(z_0,\, U(z_0))
 &=& \frac{1}{2m} \sum_{i=1}^2 \{ -(\sqrt{m_1 m_2} - m_i)z_0 + [(\sqrt{m_1 m_2} - m_i)^2z_0^2 + 4\lambda m_i z_0^2]^{1/2} \}
    \nonumber
    \\
 &=& - \Big( \frac{\sqrt{m_1 m_2}}{m} - \frac12 \Big) z_0 + \frac{1}{2m} \sum_{i=1}^2 m_i^{1/2} [(\sqrt{m_1} - \sqrt{m_2})^2 + 4\lambda]^{1/2} z_0
    \nonumber
    \\
 &=& - \Big( \frac{\sqrt{m_1 m_2}}{m} - \frac12 \Big) z_0 + \frac{1}{2m} [(m_1-m_2)^2 + 4\lambda(\sqrt{m_1} + \sqrt{m_2})^2]^{1/2} z_0 \, .
    \label{e:F(z0,U(z0))}
\end{eqnarray}

\noindent On the other hand, $F(z_0,\, U(z_0)) = U(z_0)$, which is $\frac{m + \lambda - 1}{m} - \frac{\sqrt{m_1 m_2} - 1}{m} z_0$ (by \eqref{e:Uz0}). Combining this with \eqref{e:F(z0,U(z0))} yields
\begin{equation}
  \label{e:z0}
  \rho(\lambda)^{-1}
  =
  z_0
  =
  \frac{2(m+\lambda-1)}{m-2 + [ (m_1 - m_2)^2 + 4 \lambda ( \sqrt{m_1} + \sqrt{m_2} )^2 ]^{1/2}}.
\end{equation}
Taking limit $\lambda \to 0+$, we have
\[
  \lim_{\lambda \to 0+} \rho(\lambda)
  =
  \frac{(m_1 \vee m_2) - 1}{m-1}.
\]

{\bf Step 2. Strictly increasing property for $\rho(\lambda)$.}
By a change of variables
\[
  x
  =
  m-2 + [ (m_1 - m_2)^2 + 4 \lambda ( \sqrt{m_1} + \sqrt{m_2} )^2 ]^{1/2},
\]
($\lambda = \frac{(x-m+2)^2 - (m_1 - m_2)^2}{4( \sqrt{m_1} + \sqrt{m_2} )^2}$), we see that
\[
  z_0
  =
  \frac{1}{2(\sqrt{m_1} + \sqrt{m_2})^2} \, \left[ x + \frac{4 (m-1 + \sqrt{m_1
        m_2})^2}{x} - 2(m-2) \right],
\]
which is strictly decreasing in $x < 2 (m-1 + \sqrt{m_1m_2})$, i.e., $\lambda
< \sqrt{m_1 m_2} = \lambda_c(G)$. Thus $\rho(\lambda)$ is strictly increasing in
$\lambda \in (0,\, \sqrt{m_1m_2})$.

{\bf Step 3. Asymptotics for $p^{(n)}_\lambda (o,\, o)$.}
Write for simplicity $\mathbb{G}(z) := \mathbb{G}(o,\, o\, |\, z)$. Note that $U(z) = \frac{\G(z)}{\G(z) - 1}$, $|z| < R_{\mathbb{G}}$. We have from $U(z) = F(z,\, U(z))$ that
\begin{equation}
  \label{e:Hz}
  2(m+\lambda-1) - (m-2)z
  =
  \sum_{i=1}^2 \left[ \left(
        \phi_i(z) - \frac{m \G(z)}{\G(z) - 1} \right)^2 + 4 \lambda m_i z^2 \right]^{1/2}.
\end{equation}
Set
\[
  \Psi(u,\, v)
  :=
  2(m+1-\lambda) - (m-2) u - \sum_{i=1}^2 \left[ \left( \phi_i(u) - \frac{m v}{v-1} \right)^2 + 4\lambda m_i u^2 \right]^{1/2}.
\]
Notice that $\Psi(z, \, \G(z)) = 0$. By \eqref{e:phiz0}, there exists $\theta_0 \in (0, \,
\pi)$ such that
\[ \cos \theta_0
  =
  \frac{\phi_1(z_0) - m U(z_0)}{[(\phi_1(z_0) - m U(z_0))^2 + 4 \lambda m_1 z_0^2]^{1/2}}
  =
  \frac{m U(z_0) - \phi_2(z_0)}{[(\phi_2(z_0) - m U(z_0))^2 + 4 \lambda m_2 z_0^2]^{1/2}}.
\]
By direct computations, we have
\begin{align*}
  \frac{\partial \Psi}{\partial v}(z_0,\, \G(z_0))
  & =
      0, \\
  \frac{\partial^2 \Psi}{\partial v^2}(z_0, \, \G(z_0))
  & =
  - m^2 \sum_{i=1}^2 \frac{(\G(z_0) - 1)^{-4} \sin^2 \theta_0 }{[(
    \phi_i(z_0) - m U(z_0))^2 + 4 \lambda m_i z_0^2]^{1/2}}
  \neq 0 \\
  \frac{\partial \Psi}{\partial u}(z_0,\, \G(z_0))
  & = -(m-2) + (m_1 - m_2) \cos \theta_0 - 2\sqrt{\lambda}(\sqrt{m_1} + \sqrt{m_2}) \sin \theta_0
    \neq 0.
\end{align*}
Applying the method of Darboux (see \cite{BE1974} Theorem 5) as in the proof of Lemma \ref{lem2.6}, we obtain the desired asymptotics for $p^{(n)}_\lambda (o,\, o)$.
\qed

\appendix

\section{Proof of Lemma~\ref{lem2.6} 
}
\label{s:pfthm2.2}


\begin{proof}[Proof of Lemma~\ref{lem2.6}] The lemma holds trivially for $\lambda=0.$ So assume $\lambda>0.$ Notice that $\RW_\lambda$ $(X_n)_{n=0}^{\infty}$ must return to $o$ in even steps, and that $\{|X_n| \}_{n=0}^{\infty}$ with $|X_0| =0$
is a Markov chain on $\mathbb{Z}_{+}$ with transition probabilities given by
$$
p(x, \, y)
=
\left\{\begin{array}{cl}
1 &{\rm if}\ x=0, \ y=1\\
      \frac{\lambda}{d-1+\lambda} &{\rm if}\ y=x-1\ \mbox{and}\ x\neq 0,\\
     \frac{d-1}{d-1+\lambda} &{\rm otherwise}.
   \end{array}
\right.
$$

\noindent Recall for any $n\in \mathbb{N}$ and $k\in\mathbb{Z}_{+}$,
$$
f^{(2n)}_\lambda (o,\, o)
=
\mathbb{P}_o\left(\tau_o^{+}=2n\right),
\qquad
f^{(2n-1)}_\lambda(o,\, o)=0,\ \lambda\in (0,\, \infty),
$$

\noindent and the $k$th Catalan number given by $c_k = \frac{1}{k+1}{{2k}\choose{k}}$, with the associated related generating function
\begin{equation}
    \mathcal{C}(x)
    :=
    \sum_{k=0}^{\infty}c_kx^k
    =
    \frac{1-\sqrt{1-4x}}{2x},
    \qquad
    x\in \left[-\frac{1}{4}, \, \frac{1}{4}\right].
    \label{Catalan_generating_fct}
\end{equation}

\noindent Note the number of all $2n$-length nearest-neighbour paths $\gamma=w_0w_1\cdots w_{2n}$ on $\mathbb{Z}_{+}$ such that
$$
w_0=w_{2n}=0,
\qquad
w_j\ge 1,\ 1\le j\le 2n-1
$$

\noindent is precisely $c_{n-1}$. Hence for any $\lambda>0,$
$$
f_\lambda^{(2n)}(o, \, o)
=
c_{n-1}\left(\frac{d-1}{d-1+\lambda}\right)^{n-1}\left(\frac{\lambda}{d-1+\lambda}\right)^n,\ n\in\mathbb{N},
$$

\noindent which readily yields \eqref{f_regular_tree} by means of Stirling's formula.

By definition, for $\lambda>0$,
\begin{align*}
    U_\lambda(o,\, o\, |\, z)
 &=\sum_{n=1}^\infty f^{(2n)}_\lambda(o,\, o)z^{2n}
    =
    \sum_{n=1}^\infty c_{n-1}
    \left(\frac{d-1}{d-1+\lambda}\right)^{n-1}
    \left(\frac{\lambda}{d-1+\lambda}\right)^n z^{2n}
    \\
 &=\frac{\lambda}{d-1+\lambda} z^2 \, \mathcal{C}\left(\frac{\lambda(d-1)z^2}{(d-1+\lambda)^2}\right) ,
\end{align*}

\noindent which, in view of \eqref{Catalan_generating_fct}, implies that for $|z| \le \frac{d-1+\lambda}{2\sqrt{\lambda(d-1)}}$,
\begin{equation}
    U_\lambda(o,\, o\, |\, z)
    =
    \frac{(d-1+\lambda)-\sqrt{(d-1+\lambda)^2-4\lambda(d-1)z^2}}{2(d-1)} .
    \label{U_regular_tree}
\end{equation}

\noindent Taking $z=1$ gives that
$$
\theta_{\mathbb{T}_d}(\lambda)
=
U_\lambda(o, \, o \, | \, 1)
=
\frac{\lambda\wedge (d-1)}{d-1}.
$$
Notice from \eqref{U_regular_tree} that when $0<\lambda\le d-1$,
$$
U_{\lambda}\left(o,\, o \, \Big| \, \frac{d-1+\lambda}{2\sqrt{\lambda(d-1)}} \right)
=
\frac{d-1+\lambda}{2(d-1)}
\le 1.
$$

\noindent Hence, for $|z|<\frac{d-1+\lambda}{2\sqrt{\lambda(d-1)}}$ and $0<\lambda\le d-1$,
\begin{align}
    \mathbb{G}_\lambda(o, \, o\, |\, z)
 &=\frac{1}{1-U_\lambda(o, \, o\, |\, z)}
    \nonumber
    \\
 &=\frac{2(d-1)}{2(d-1)- (d-1+\lambda)+\sqrt{(d-1+\lambda)^2-4\lambda(d-1)z^2}} .
    \label{G_regular_tree}
\end{align}

\noindent This implies that the convergence radius for $\mathbb{G}_\lambda(o, \, o\, |\, z)$ is $\frac{d-1+\lambda}{2\sqrt{\lambda(d-1)}}$. In other words,
$$
\rho(\lambda)
:=
\rho_{\mathbb{T}_d}(\lambda)
=
\frac{2\sqrt{\lambda(d-1)}}{d-1+\lambda},
\qquad
0<\lambda\le d-1.
$$

It remains to show \eqref{p_regular_tree} for $\lambda\in (0, \, d-1)$. Write $a(\lambda)=\frac{2(d-1)}{d-1+\lambda}$ and $b(\lambda)=\frac{d-1-\lambda}{d-1+\lambda}.$ Then for any $|z|\le R_{\mathbb{G}}(\lambda)=\frac{1}{\rho(\lambda)},$
$$
\mathbb{G}_\lambda(o, \, o\, |\, z)
=
\frac{2(d-1)}{d-1+\lambda}\frac{1}{\frac{d-1-\lambda}{d-1+\lambda}+\sqrt{1-\rho(\lambda)^2z^2}}
=
\frac{a(\lambda)}{b(\lambda)+\sqrt{1-\rho(\lambda)^2z^2}}.
$$

\noindent Let
$$
\Phi(t)
:=
\Phi_\lambda(t)
=
\frac{-a(\lambda)b(\lambda)+\sqrt{a(\lambda)^2+\rho(\lambda)^2(1-b(\lambda)^2)t^2}}{1-b(\lambda)^2},
\qquad
t\in\mathbb{R}.
$$

\noindent Then for any $|z|\le R_{\mathbb{G}}(\lambda)$,
$$
\mathbb{G}_\lambda(o, \, o\, |\, z)
=
\Phi\left(z \, \mathbb{G}_\lambda(o, \, o\, |\, z)\right).
$$

\noindent Define
$$
\Psi(u,\, v)
:=
\Phi(uv)-v,
\qquad
u, \, v\in\mathbb{R}\, .
$$


\noindent Then
\begin{eqnarray*}
 && \frac{\partial \Psi(u,\, v)}{\partial v}
    \Big|_{(u,\, v)=\left( \frac{1}{\rho(\lambda)}, \, \mathbb{G}_{\lambda} (o, \, o \, | \, \frac{1}{\rho(\lambda)}) \right)}
    = 0,
    \\
    \,
    \\
 &&c_1(\lambda)
    :=
    \frac{\partial^2\Psi(u, \, v)}{\partial v^2}
    \Big|_{(u,\, v)=\left( \frac{1}{\rho(\lambda)}, \, \mathbb{G}_{\lambda} (o, \, o \, | \, \frac{1}{\rho(\lambda)}) \right) }
    =
    \frac{(d-1-\lambda)^3}{2(d-1)(d-1+\lambda)^2} \not= 0,
    \\
    \,
    \\
&&c_2(\lambda)
    :=
    \frac{\partial\Psi(u, \, v)}{\partial u}
    \Big|_{(u,\, v)=\left( \frac{1}{\rho(\lambda)}, \, \mathbb{G}_{\lambda} (o, \, o \, | \, \frac{1}{\rho(\lambda)}) \right) }
    =
    \frac{2\rho(\lambda)(d-1)}{d-1-\lambda}\not= 0.
\end{eqnarray*}

\noindent Applying the method of Darboux (see \cite{BE1974} Theorem 5), we obtain that
$$
p^{(2n)}_\lambda(o,\, o)
\sim
\left(\frac{c_1(\lambda)}{2\pi\rho(\lambda)c_2(\lambda)}\right)^{1/2}\rho(\lambda)^{2n}{(2n)}^{-3/2}
=
\frac{(d-1-\lambda)^2}{16(\pi\lambda)^{1/2}(d-1)^{3/2}}\rho(\lambda)^{2n}n^{-3/2}.
$$

\noindent The idea of using the method of Darboux to establish the asymptotics for $p^{(2n)}_\lambda(o,\, o)$ is not new. For example, in Woess~\cite{WW2000} Chapter III Section 17 pp.~181--189, examples of random walk on groups are given such that $p^{(n)}(o,\, o)\sim c\rho^{n}n^{-3/2}$ for some constant $c>0$. The exact value of $c$ is not known in general.

For $z\in (-1,\, 1),$ $\mathbb{G}_{d-1}(o, \, o\, |\, z) =\frac{1}{\sqrt{1-z^2}}
=\sum_{n=0}^{\infty} \frac{(2n)!}{2^{2n}(n!)^2}z^{2n}$. Thus 
$$
p_{d-1}^{(2n)}(o,\, o)
=
\frac{(2n)!}{2^{2n}(n!)^2}
\sim
\frac{1}{\sqrt{\pi n}}.
$$

\end{proof}

\flushleft{Zhan Shi\\
LPSM, Universit\'{e} Paris VI\\
4 place Jussieu, F-75252 Paris Cedex 05, France\\
E-mail: \texttt{zhan.shi@upmc.fr}}\\

\flushleft{Vladas Sidoravicius\\
NYU-ECNU Institute of Mathematical Sciences at NYU Shanghai\\
\& Courant Institute of Mathematical Sciences\\
New York, NY 10012, USA\\
E-mail: \texttt{vs1138@nyu.edu}}\\

\flushleft{He Song\\
  Department of Mathematical Science, Taizhou University\\
  Taizhou 225300, P. R. China\\
  Email: \texttt{tayunzhuiyue@126.com}}

\flushleft{Longmin Wang and Kainan Xiang\\
School of Mathematical Sciences, LPMC, Nankai University\\
Tianjin 300071, P. R. China\\
E-mails: \texttt{wanglm@nankai.edu.cn} (Wang)\\
\hskip 1.4cm \texttt{kainanxiang@nankai.edu.cn} (Xiang)}
\end{document}